\newtheorem{theorem}{Theorem}[section]
\newtheorem{proposition}[theorem]{Proposition}
\newtheorem{corollary}[theorem]{Corollary}
\newtheorem{lemma}[theorem]{Lemma}
\newtheorem{conjecture}[theorem]{Conjecture}
\theoremstyle{definition}
\newtheorem{definition}[theorem]{Definition}
\newtheorem{remark}[theorem]{Remark}
\newtheorem{example}[theorem]{Example}
\DeclareMathOperator{\Ap}{\operatorname{Ap}}
\title{Cyclotomic numerical semigroup polynomials
	with at most two irreducible factors}
\author{Alessio Borz\`{i}, Andr\'{e}s Herrera-Poyatos, Pieter Moree}
\begin{document}
	
	\maketitle
	
	\begin{abstract}
		A numerical semigroup $S$ is cyclotomic if its semigroup polynomial $\mathrm{P}_S$ is a product of cyclotomic polynomials. The number of irreducible factors of $\mathrm{P}_S$ (with multiplicity) is the polynomial length $\ell(S)$ of $S.$
		We show that a cyclotomic numerical semigroup is complete intersection if 
		$\ell(S)\le 2$. This establishes a particular case of a conjecture of Ciolan, Garc\'{i}a-S\'{a}nchez and Moree (2016) claiming that every cyclotomic numerical semigroup is complete intersection. In addition, we investigate the relation between $\ell(S)$ and the embedding dimension of $S.$
	\end{abstract}
	
	\section{Introduction}
	
	The $n$-th cyclotomic polynomial is the minimal polynomial of any primitive $n$-th root of unity
	\begin{equation}\label{cyclotomic polynomial}
	\Phi_n(x) = \prod_{\substack{ j=1 \\ (j,n) = 1 }}^n \left( x - e^{2 \pi i j / n} \right) = \sum_{k=0}^{\varphi(n)} a_n(k) x^k,
	\end{equation}
	where $\varphi$ is Euler's totient function. It is of degree $\varphi(n)$ and has integer coefficients.
	
	Let $\mathbb{N}$ denote the set of non-negative integers. A \emph{numerical semigroup} $S$ is an additive submonoid of $\mathbb{N}$ with finite complement in $\mathbb{N}$. The \emph{semigroup polynomial} of $S$ is defined by $\mathrm{P}_S(x) = 1 + (x-1) \sum_{g \in \mathbb{N} \setminus S} x^g$.
	If $p \neq q$ are primes and $\langle p,q \rangle$ is the numerical semigroup generated by $p$ and $q$, then
	\begin{equation}\label{folklore result}
	\Phi_{pq}= \mathrm{P}_{\langle p,q \rangle},
	\end{equation}
	see for instance \cite{moree2014numerical}.
	This identity can be used to reprove various properties of cyclotomic polynomials, e.g., that $a_{pq}(k) \in \{ 0,1,-1 \}$, a result due to Migotti  \cite{migotti1883theorie}.
	More generally, if $p$ and $q$ are two coprime non-negative integers, then $\mathrm{P}_{\langle p,q \rangle}$ is a product of cyclotomic polynomials. 
	There are various other interesting infinite families of numerical semigroups 
	such that their semigroup polynomial has only cyclotomic factors.
	These facts led Ciolan, Garc\'{i}a-S\'{a}nchez and Moree \cite{moree2014numerical} to define 
	\emph{cyclotomic numerical semigroups} as numerical semigroups whose semigroup polynomial is a product of cyclotomic polynomials. 
	For this family of numerical semigroups it is easy to see that the following implications hold:
	\begin{equation}\label{implications}
	\text{complete intersection} \Longrightarrow \text{cyclotomic} \Longrightarrow \text{symmetric}
	\end{equation}
	(see Section~\ref{Section 2.4}). The converse of the second implication of \eqref{implications} is far from true. In fact, for any odd integer $F$ with $F \ge 9$, there is a numerical semigroup with Frobenius number $F$ that is symmetric and non-cyclotomic. This was proven independently by Garc{\'\i}a-S{\'a}nchez (in the appendix of  \cite{herrera2018coefficients}), 
	Herrera-Poyatos and Moree \cite{herrera2018coefficients} and Sawhney and Stoner \cite{sawhney2018symmetric}.
	In the latter two papers it is shown (by quite different
	methods) that, for every $k \geq 5$, the polynomial
	\[ 1-x+x^k-x^{2k-1}+x^{2k}, \]
	which is the semigroup polynomial of the symmetric numerical semigroup $S_k = \langle k,k+1,\dots,2k-2 \rangle$, is not a product of only cyclotomic polynomials. Therefore, $S_k$ is symmetric, but not cyclotomic.
	It was conjectured by Ciolan, Garc\'{i}a-S\'{a}nchez and Moree \cite{ciolan2016cyclotomic} that the converse of the first implication in (\ref{implications}) holds true, more precisely they made the following conjecture.
	
	\begin{conjecture}{\rm \cite[Conjecture 1]{ciolan2016cyclotomic}.}\label{conjecture}
		A numerical semigroup $S$ is cyclotomic if and only if it is complete intersection.
	\end{conjecture}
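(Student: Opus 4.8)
The plan is to treat the two implications of the biconditional separately. The implication \textbf{complete intersection $\Longrightarrow$ cyclotomic}, already recorded in \eqref{implications}, I would prove through the generating function $H_S(x) = \sum_{s \in S} x^s$ of $S$. Subtracting the gap series from $\sum_{n \ge 0} x^n = 1/(1-x)$ yields the identity $\mathrm{P}_S(x) = (1-x) H_S(x)$. If $S$ is a complete intersection with minimal generators $n_1, \dots, n_e$ and complete intersection relation degrees $c_1, \dots, c_{e-1}$, then $H_S(x) = \prod_{i=1}^{e-1}(1 - x^{c_i}) / \prod_{j=1}^{e}(1 - x^{n_j})$, so that
\[
\mathrm{P}_S(x) = (1-x)\,\frac{\prod_{i=1}^{e-1}(1 - x^{c_i})}{\prod_{j=1}^{e}(1 - x^{n_j})}.
\]
Since each factor $1 - x^{m} = \prod_{d \mid m} \Phi_d(x)$ is a product of cyclotomic polynomials and $\mathrm{P}_S$ is known to be a polynomial, every $\Phi_d$ survives the cancellation with non-negative multiplicity; hence $\mathrm{P}_S$ is a product of cyclotomic polynomials. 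This computation also identifies $\ell(S)$ with the number of cyclotomic factors left after cancellation, a bookkeeping device I would exploit in the converse.

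The content of the conjecture is the reverse implication \textbf{cyclotomic $\Longrightarrow$ complete intersection}, which I would attack by induction on the polynomial length $\ell(S)$, the cases $\ell(S) \le 2$ being exactly the theorem established in this paper and serving as the base. For the inductive step one uses the standard characterization of complete intersections as iterated gluings (Delorme): $S$ is a complete intersection if and only if $S = \mathbb{N}$ or $S$ is a gluing $S = d_1 S_1 + d_2 S_2$ of two complete intersections, where $\gcd(d_1, d_2) = 1$, $d_1 \in S_2 \setminus \{0\}$ and $d_2 \in S_1 \setminus \{0\}$. The multiplicativity of Hilbert series under gluing (the new gluing relation sits in degree $d_1 d_2$) gives
\[
\mathrm{P}_S(x) = \mathrm{P}_{S_1}(x^{d_1})\,\mathrm{P}_{S_2}(x^{d_2})\,\frac{(1-x)(1 - x^{d_1 d_2})}{(1 - x^{d_1})(1 - x^{d_2})},
\]
which for $S_1 = S_2 = \mathbb{N}$ recovers \eqref{folklore result}. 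If such a gluing exists and $S$ is cyclotomic, then the factor $\mathrm{P}_{S_i}(x^{d_i})$ of $\mathrm{P}_S$ has all its roots among roots of unity, and since each root of $\mathrm{P}_{S_i}$ is a $d_i$-th power of such a root, $S_1$ and $S_2$ are cyclotomic as well; since the last factor contributes at least one cyclotomic polynomial (namely $\Phi_{d_1 d_2}$), one has $\ell(S_1), \ell(S_2) < \ell(S)$, and the induction closes because a gluing of complete intersections is a complete intersection.

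The hard part will be producing the gluing in the inductive step, that is, recovering genuine semigroup structure from the purely multiplicative datum that $\mathrm{P}_S$ splits into cyclotomics. A product of cyclotomic polynomials is far more flexible than a product of factors $1 - x^{c}$, so the factorization alone does not single out a candidate gluing degree or a candidate partition of the minimal generating set. The approach I would pursue is to read a gluing off the factorization: locate an integer $m$ such that the cyclotomic factors dividing $1 - x^{m}$ occur in $\mathrm{P}_S$ with the multiplicity signature left by a substitution $x \mapsto x^{d}$, then verify that the induced splitting of the minimal generators of $S$ is an honest gluing, using the symmetry of $S$ from \eqref{implications} together with Frobenius number constraints to exclude the degenerate non-gluing configurations. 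Proving that such a distinguished $m$ always exists, and that it always forces a gluing, is the crux on which the full conjecture rests and is precisely what remains open beyond the length-two case treated here.
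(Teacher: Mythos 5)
You are attempting to prove Conjecture~\ref{conjecture}, which is \emph{open}: the paper itself proves no such theorem, only the forward implication and the special case $\ell(S)\le 2$ (Theorem~\ref{l leq 2}), so no complete blind proof was possible here. Within that constraint, much of what you write is correct and coincides with the paper's machinery. Your forward direction is essentially the paper's Corollary~\ref{complete intersection is cyclotomic}: Corollary~\ref{complete intersection hilbert series} together with \eqref{xn-1} gives $\mathrm{P}_S=\prod\Phi_n^{f_n}$ with a priori $f_n\in\mathbb{Z}$, and negative exponents are excluded because they would give the polynomial $\mathrm{P}_S$ a pole at a root of unity. Your reduction steps for the converse are also sound and appear in the paper: the product formula under gluing is Proposition~\ref{gluing iff Hilbert} (your displayed formula is its item (3) combined with \eqref{embedis2}); cyclotomicity descending to the gluands is Corollary~\ref{gluing cyclotomic}, and your root-of-unity argument for it works since a monic integer polynomial all of whose roots are roots of unity is a product of cyclotomics; the strict drop $\ell(S_i)<\ell(S)$ follows from \eqref{length gluing} because $(d(d_1)-1)(d(d_2)-1)\ge 1$.

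The genuine gap --- which, to your credit, you name yourself --- is the entire content of the conjecture: in the inductive step you must \emph{produce} a gluing $S=d_1S_1+_{d_1d_2}d_2S_2$ from the bare hypothesis that $\mathrm{P}_S$ splits into cyclotomic factors, and nothing in your proposal does this. The sentence ``locate an integer $m$ such that the cyclotomic factors dividing $1-x^m$ occur in $\mathrm{P}_S$ with the multiplicity signature left by a substitution $x\mapsto x^d$'' restates the problem rather than solving it: you give no proof that such an $m$ exists, nor that it induces a splitting of the minimal generators that is an honest gluing. It is instructive to compare with how the paper settles even the length-two case: it does not hunt for a gluing degree abstractly in the factorization, but extracts arithmetic information coefficient by coefficient, using \eqref{Mobius inversion formula2} to reduce identities like $\mathrm{H}_S(x)(1-x^{p_1})(1-x^{p_2})\equiv f(x^q) \pmod{x^{p_2+1}}$ and thereby force specific primes into $S$ (Lemmas~\ref{lem:twoprimes} and~\ref{lem:length-2:q}, with Lemma~\ref{Frobenius upperbound} pinning down $\mathrm{F}(S)$), and then, in Lemma~\ref{lem:length-2:classification}, builds an auxiliary numerical semigroup $S'$ out of the coefficients of $\Phi_l(x)/(1-x)$, proves $\mathrm{P}_{S'}=\Phi_l$, and only \emph{afterwards} recognizes $S=qS'+_{qp}p\mathbb{N}$ as a gluing. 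So your program (induct on $\ell$, close via gluings) is the natural one, and the paper's Section~4 is exactly one instance of it; but those coefficient-extraction techniques do not visibly extend to general length, and as written your proposal establishes only the implication already known plus a reduction scheme whose crucial step is precisely the open problem.
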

	
	Using the GAP package \cite{delgadonumericalsgps} the authors of  \cite{ciolan2016cyclotomic} verified that Conjecture~\ref{conjecture} holds true for numerical semigroups with Frobenius number up to $70$. Further, in the context of graded algebras, Borz{\`\i} and D'Al{\`\i} \cite{borzi2020graded} prove a version of Conjecture~\ref{conjecture} for Koszul algebras and for graded algebras that have an irreducible $h$-polynomial.
	
	In this paper we classify all cyclotomic numerical semigroups such that their semigroup polynomial has at most two irreducible polynomial factors.

	\begin{theorem} \label{thm:main}
		Let $S$ be a cyclotomic numerical semigroup.
		\begin{enumerate}
			\item \label{item:main:1} If $\mathrm{P}_S$ is irreducible, then $S=\langle p,q\rangle$ with $p \neq q$ primes and $\mathrm{P}_S = \Phi_{pq}$.
			\item \label{item:main:2} If $\mathrm{P}_S$ is a product of two irreducible polynomials, then either
			\begin{enumerate}
				\item  $S=\langle p,q^2\rangle$ with $p,q$ distinct primes and $\mathrm{P}_S= \Phi_{pq}\Phi_{pq^2}$; or
				\item $S=\langle p, q^2,qr\rangle$ with $p,q,r$ distinct primes such that $p \in \langle q, r \rangle$ and $\mathrm{P}_S= \Phi_{pq} \Phi_{q^2r}$.
			\end{enumerate}
		\end{enumerate}
	\end{theorem}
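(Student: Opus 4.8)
The plan is to combine three ingredients: elementary constraints obtained by evaluating $\mathrm{P}_S$ and reading off its lowest coefficients, a bound on the embedding dimension $e(S)$ in terms of $\ell(S)$, and the explicit cyclotomic factorization of the semigroup polynomials of complete intersections coming from gluing.

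First I would record the cheap constraints. Writing $\mathrm{P}_S=\prod_i\Phi_{n_i}^{e_i}$, the identity $\mathrm{P}_S(1)=1$ together with $\Phi_n(1)=p$ for prime powers $n=p^k$ and $\Phi_n(1)=1$ otherwise forces every $n_i$ to have at least two distinct prime factors; in particular $\Phi_1=x-1$ never occurs (consistent with $\mathrm{P}_S(1)\neq 0$). Since $S\neq\mathbb{N}$, the coefficient of $x$ in $\mathrm{P}_S$ is $-1$, and comparing with $\mathrm{P}_S=\prod_i\Phi_{n_i}^{e_i}$ and $[x]\Phi_n=-\mu(n)$ gives $\sum_i e_i\mu(n_i)=1$. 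For $\ell(S)=1$ this yields $\mu(n)=1$, and for $\ell(S)=2$ it forces $\mathrm{P}_S=\Phi_{n_1}\Phi_{n_2}$ with $n_1\neq n_2$ and $\{\mu(n_1),\mu(n_2)\}=\{1,0\}$, so the case $\Phi_n^2$ is excluded.

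The conceptual heart is the bound $e(S)\le\ell(S)+1$ for cyclotomic $S$; this is the relation between polynomial length and embedding dimension advertised in the abstract, and I expect it to be the main obstacle. I would prove it by analysing the Apéry set $w_0,\dots,w_{m(S)-1}$ of $S$ with respect to its multiplicity $m(S)$ together with the graded Betti numbers, via $\sum_{s\in S}x^s=\mathrm{P}_S(x)/(1-x)$ and its refinement $\sum_i x^{w_i}=\mathrm{P}_S(x)(1+x+\cdots+x^{m(S)-1})$, whose left-hand side has exactly $m(S)$ unit coefficients. Granting the bound, $\ell(S)=1$ gives $e(S)\le 2$, and since $e(S)=1$ only for $S=\mathbb{N}$ (where $\mathrm{P}_S=1$ is not irreducible) we obtain $S=\langle a,b\rangle$ with $\gcd(a,b)=1$. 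The key computation is the closed form
\[
\mathrm{P}_{\langle a,b\rangle}(x)=\prod_{\substack{d\mid a,\ d>1\\ e\mid b,\ e>1}}\Phi_{de}(x),
\]
obtained from $\mathrm{P}_{\langle a,b\rangle}=(1-x)(1-x^{ab})/\bigl((1-x^a)(1-x^b)\bigr)$; irreducibility means the index set is a singleton, i.e. $a,b$ are prime (so $\tau(a)=\tau(b)=2$, with $\tau$ the number of divisors), giving $S=\langle p,q\rangle$ and $\mathrm{P}_S=\Phi_{pq}$, which proves \eqref{item:main:1}.

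For \eqref{item:main:2} the bound gives $e(S)\in\{2,3\}$. If $e(S)=2$, the same factorization forces $(\tau(a)-1)(\tau(b)-1)=2$, hence $\{a,b\}=\{p,q^2\}$ and $\mathrm{P}_S=\Phi_{pq}\Phi_{pq^2}$, which is case (a). If $e(S)=3$, then since $S$ is symmetric, Herzog's theorem makes $S$ a complete intersection, hence a gluing $S=d_1\langle a,b\rangle+d_2\mathbb{N}=\langle d_1a,d_1b,d_2\rangle$ with $\gcd(d_1,d_2)=1$ and $d_2\in\langle a,b\rangle$, where $d_1,d_2\ge 2$ and $a,b\ge 2$. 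I would then use the gluing formula $\mathrm{P}_S(x)=\mathrm{P}_{\langle d_1,d_2\rangle}(x)\,\mathrm{P}_{\langle a,b\rangle}(x^{d_1})$ and count cyclotomic factors: each of the two factors contributes at least one, so $\ell(S)=2$ forces each to contribute exactly one. The first gives $d_1,d_2$ prime; the second needs $\mathrm{P}_{\langle a,b\rangle}=\Phi_{ab}$ with $a,b$ prime and $\Phi_{ab}(x^{d_1})$ still irreducible, which by $\Phi_{ab}(x^{d_1})=\Phi_{d_1ab}$ (valid exactly when $d_1\mid ab$) requires $d_1\in\{a,b\}$. Writing $d_1=q=a$, $b=r$, $d_2=p$ and unwinding the gluing conditions yields $S=\langle p,q^2,qr\rangle$ with $p\in\langle q,r\rangle$ and $\mathrm{P}_S=\Phi_{pq}\Phi_{q^2r}$, i.e. case (b). The two delicate points are justifying the embedding-dimension bound and tracking precisely, through the factorization of $\Phi_m(x^{d})$, that no additional cyclotomic factors appear in the $e(S)=3$ case; a fallback for \eqref{item:main:1} that avoids the bound is to show directly that $\mu(n)=1$ together with the alternating $\{-1,0,1\}$ shape of $\mathrm{P}_S$ forces $n=pq$, but this seems to require sharper control of cyclotomic coefficients.
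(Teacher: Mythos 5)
There is a genuine gap: your ``conceptual heart,'' the inequality $\mathrm{e}(S)\le\ell(S)+1$ for cyclotomic $S$, is precisely Conjecture~\ref{con:length-inequality} of the paper, which is \emph{open}. The authors state explicitly that if this conjecture held, Theorem~\ref{thm:main} would follow immediately ($\ell(S)\le 2$ gives $\mathrm{e}(S)\le 3$, and then Corollary~\ref{lessthanfour} finishes), so your plan essentially assumes the hard part. The inequality is proved in the paper only for complete intersection semigroups (Proposition~\ref{prop:length-inequality:ci}, via the gluing recursion), which would be circular here since establishing that $S$ is a complete intersection is the goal. Your proposed route to the bound --- reading off unit coefficients from $\sum_i x^{w_i}=\mathrm{P}_S(x)(1+\cdots+x^{m(S)-1})$ --- gives information about the Ap\'ery set's cardinality $m(S)$, not about the number of minimal generators, and there is no evident way to connect the count of irreducible factors of $\mathrm{P}_S$ to $\mathrm{e}(S)$ along these lines. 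Your fallback for part~\ref{item:main:1} also does not work as stated: $\mathrm{P}_S$ need not have coefficients in $\{-1,0,1\}$, and in any case you would still need to locate two generators of $S$.

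What the paper does instead is bypass any a priori bound on $\mathrm{e}(S)$ and extract generators of $S$ directly from the factorization. Your cheap constraints (the sum $\sum_i e_i\mu(n_i)=1$, exclusion of $\Phi_n^2$, the shape $\Phi_n\Phi_m$ with $\{\mu(n),\mu(m)\}=\{1,0\}$) do match Lemma~\ref{cs-deriv-0} and the setup of the paper's Section~\ref{sec:length:2}, and your analyses of the cases $\mathrm{e}(S)=2$ and $\mathrm{e}(S)=3$ via \eqref{e=2factorincyclotomics} and the gluing factorization are correct \emph{conditional} on knowing $\mathrm{e}(S)\le 3$. But the paper's actual engine is different: writing $n=p_1\cdots p_{2k}$ with $\mu(n)=1$, it expands $\Phi_n$ by M\"obius inversion, reduces $\mathrm{H}_S(x)(1-x^{p_1})(1-x^{p_2})\equiv\cdots$ modulo $x^{p_2+1}$ (and then modulo $x^{p_3+1}$) to force small primes into $S$, and combines this with the Frobenius bound $\mathrm{F}(S)+1\le(p-1)(q-1)$ (Lemma~\ref{Frobenius upperbound}) to pin down $n=pq$ and $\mathrm{P}_S=\Phi_{pq}$. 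In the length-two case it then builds an auxiliary semigroup $S'$ with $\mathrm{P}_{S'}=\Phi_l$ out of the coefficients of $\Phi_l(x)/(1-x)$ and reduces to the length-one classification. You would need to supply arguments of this kind (or an actual proof of Conjecture~\ref{con:length-inequality}) to close the gap.
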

	As a byproduct, 
	%since these semigroups are generated by at most three integers, it follows from this theorem and the earlier mentioned result of Herzog, that 
	since the numerical semigroups obtained in Theorem~\ref{thm:main} are easily seen to be complete intersections with the help of gluings (see Section~\ref{sec:preliminaries}), Conjecture~\ref{conjecture} holds true in the cases studied in Theorem~\ref{thm:main}.
	\begin{theorem}\label{l leq 2}
		Suppose that the semigroup polynomial $\mathrm{P}_S$ has at most two irreducible factors. Then $S$ is cyclotomic if and only if it is complete intersection.
	\end{theorem}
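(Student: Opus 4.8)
The plan is to treat Theorem~\ref{l leq 2} as an immediate consequence of the classification in Theorem~\ref{thm:main}, together with the standard fact that a gluing of complete intersections is again a complete intersection. One of the two implications costs nothing: the direction ``complete intersection $\Rightarrow$ cyclotomic'' is precisely the first arrow in \eqref{implications}, and it holds for \emph{every} numerical semigroup, with no hypothesis on the number of irreducible factors of $\mathrm{P}_S$. So all the content lies in the converse, where I will use the assumption that $\mathrm{P}_S$ has at most two irreducible factors.

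For the converse, suppose $S$ is cyclotomic, so that $\mathrm{P}_S$ is a product of cyclotomic polynomials. Under the hypothesis $\ell(S)\le 2$, the polynomial $\mathrm{P}_S$ is then either irreducible or a product of exactly two irreducible (necessarily cyclotomic) factors. These are exactly the two regimes analysed in Theorem~\ref{thm:main}, so I may invoke it to conclude that $S$ is one of the three explicit semigroups
\[ \langle p,q\rangle, \qquad \langle p,q^2\rangle, \qquad \langle p,q^2,qr\rangle \text{ with } p\in\langle q,r\rangle, \]
with $p,q,r$ distinct primes. (The degenerate case $\mathrm{P}_S=1$, i.e.\ $S=\mathbb{N}=\langle 1\rangle$, is trivially a complete intersection.) It then remains only to check that each semigroup on this short list is a complete intersection.

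The verification is carried out with gluings (cf.\ Section~\ref{sec:preliminaries}). The first two families are minimally generated by two coprime integers, hence are gluings of two copies of $\mathbb{N}=\langle 1\rangle$ and are therefore complete intersections. The third family is the only one demanding an actual gluing argument, and this is where I expect the (very mild) main obstacle to lie, since one must match the membership condition correctly: I would write $\langle p,q^2,qr\rangle = q\langle q,r\rangle + p\langle 1\rangle$, exhibiting $S$ as the gluing of $S_1=\langle q,r\rangle$ with $S_2=\mathbb{N}$ for the data $d_1=q$ and $d_2=p$. The gluing conditions $\gcd(d_1,d_2)=\gcd(q,p)=1$, $d_1=q\in S_2=\mathbb{N}$, and $d_2=p\in S_1=\langle q,r\rangle$ all hold, the last being exactly the hypothesis $p\in\langle q,r\rangle$ recorded in Theorem~\ref{thm:main}. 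As $\langle q,r\rangle$ is a complete intersection and gluings preserve this property, $S$ is a complete intersection, which finishes the proof. I would stress that the real work sits upstream in Theorem~\ref{thm:main}; once that classification is in hand, Theorem~\ref{l leq 2} is essentially a corollary.
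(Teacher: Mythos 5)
Your proposal is correct and follows the paper's own route: the paper likewise derives Theorem~\ref{l leq 2} as a byproduct of Theorem~\ref{thm:main}, noting that the semigroups in the classification are complete intersections via gluings (the case $\langle p,q^2,qr\rangle = q\langle q,r\rangle +_{qp} p\mathbb{N}$ being exactly the gluing you exhibit, which also appears in the proof of Lemma~\ref{lem:length-2:classification}). Your explicit handling of the degenerate case $S=\mathbb{N}$ and the verification of the gluing conditions are fine.
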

	
	Theorem~\ref{thm:main} motivates the following definition. We define the \emph{polynomial length} $\ell(S)$ of $S$ as the number of irreducible factors of $\mathrm{P}_S$ (with multiplicity). We study this quantity in Section~\ref{sec:length}.
	
	Our paper is organised as follows. In Section~\ref{sec:preliminaries} we gather some preliminary material that is partly expository and will be useful further on. In Section~\ref{sec:length:1} we prove 
	part~\ref{item:main:1} of Theorem~\ref{thm:main}, and in Section~\ref{sec:length:2} we prove part~\ref{item:main:2}. Finally, in Section~\ref{sec:open-problems} we pose some conjectures involving the polynomial length of cyclotomic numerical semigroups.
	
	The computer algebra computations in this paper were done by using Macaulay2 \cite{M2}, the GAP system \cite{gap2015gap} and, in particular, the NumericalSgps package \cite{delgadonumericalsgps}. 
	
	\section{Preliminaries} \label{sec:preliminaries}
	
	\subsection{Numerical semigroups and Hilbert series}\label{Section 2.1}
	
	For an introduction to numerical semigroups, see \cite{rosales2009numerical}. 
	
	Let $S$ be a numerical semigroup. The \emph{embedding dimension} $\mathrm{e}(S)$ of $S$ is the cardinality of the (unique) minimal generating system of $S$. The \emph{Frobenius number} of $S$ is $\mathrm{F}(S) = \max(\mathbb{Z} \setminus S)$. 
	For example, if $S = \langle a,b \rangle$ with $b>a>1$ coprime integers, then $\mathrm{F}(S)+1 = (a-1)(b-1)$ (see for instance \cite[Proposition 2.13]{rosales2009numerical}).	
	This fact in combination with the observation that if $T \subseteq S$ are two numerical semigroups, then clearly $\mathrm{F}(S) \leq \mathrm{F}(T)$, proves the following lemma.
	
	\begin{lemma}\label{Frobenius upperbound}
		If $a,b \in S$ with $\gcd(a,b)=1$, then $\mathrm{F}(S)+1 \leq (a-1)(b-1)$.
	\end{lemma}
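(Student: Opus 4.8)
The plan is to prove Lemma~\ref{Frobenius upperbound} by reducing the general statement about an arbitrary numerical semigroup $S$ containing coprime elements $a,b$ to the well-understood two-generated case, exactly as the surrounding text suggests. First I would form the submonoid $T = \langle a,b \rangle \subseteq \mathbb{N}$ generated by the two given elements. Since $\gcd(a,b)=1$, the monoid $T$ is itself a numerical semigroup: the coprimality guarantees that all sufficiently large integers lie in $T$ (every integer $n \geq (a-1)(b-1)$ is representable as a non-negative combination $xa + yb$), so $\mathbb{N}\setminus T$ is finite. This is the standard fact that two coprime generators already produce a numerical semigroup, and it is what makes the Frobenius number $\mathrm{F}(T)$ well-defined.

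Next I would invoke the closed-form Frobenius number for the two-generated case quoted in the excerpt, namely $\mathrm{F}(T)+1 = (a-1)(b-1)$, which is the classical Sylvester--Frobenius formula cited as \cite[Proposition 2.13]{rosales2009numerical}. The final ingredient is the monotonicity observation, also recalled in the text: if $T \subseteq S$ are numerical semigroups, then $S$ has at least as many elements as $T$ below any threshold, so its largest gap can only be smaller or equal, giving $\mathrm{F}(S) \leq \mathrm{F}(T)$. Concretely, any integer that is \emph{not} in $S$ is in particular not in the smaller set $T$, so $\mathbb{Z}\setminus S \subseteq \mathbb{Z}\setminus T$ and hence the maxima satisfy $\mathrm{F}(S) \leq \mathrm{F}(T)$.

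Combining these two facts yields the claim directly: since $a,b \in S$ we have $T = \langle a,b\rangle \subseteq S$, so $\mathrm{F}(S)+1 \leq \mathrm{F}(T)+1 = (a-1)(b-1)$, which is precisely the desired inequality. I would present this as a short chain of inequalities rather than as a computation, since all the analytic content has already been packaged into the two cited results.

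The only genuinely delicate point—and hence the step I would guard most carefully—is verifying that $T$ really is a numerical semigroup (finite complement) so that $\mathrm{F}(T)$ exists and the cited formula applies; this rests squarely on $\gcd(a,b)=1$ and would fail without it. Everything else is a formal manipulation of the inclusion $T\subseteq S$, so I expect no substantive obstacle beyond being careful that the hypotheses $a,b\in S$ and $\gcd(a,b)=1$ are exactly what licenses both the inclusion and the application of the two-generator Frobenius formula.
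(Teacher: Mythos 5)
Your proposal is correct and follows exactly the argument the paper gives in the paragraph preceding the lemma: reduce to $T=\langle a,b\rangle\subseteq S$, apply the Sylvester--Frobenius formula $\mathrm{F}(T)+1=(a-1)(b-1)$, and use the monotonicity $\mathrm{F}(S)\le\mathrm{F}(T)$ for $T\subseteq S$. No issues.
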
	
	
	The \emph{Hilbert series} of $S$ is
	\[ \mathrm{H}_S(x) = \sum_{s \in S} x^s \in \mathbb{Z}[\![x]\!], \]
	and the \emph{semigroup polynomial} of $S$ is 
	\begin{equation}
	\label{flauw}
	\mathrm{P}_S(x) = (1-x)\mathrm{H}_S(x) = 1 + (x-1) \sum_{g \in \mathbb{N} \setminus S}x^g. 
	\end{equation}
	Note that $\deg \mathrm{P}_S= \mathrm{F}(S)+1$.
	The second equality in \eqref{flauw} easily follows from $\mathrm{H}_S(x) + \sum_{g \in \mathbb{N} \setminus S}x^g = 1/(1-x),$ where here and in the sequel we work in $\mathbb{Z}[\![x]\!]$ and use the shorthand $1/(1-x)$
	for $1+x+x^2+x^3+\cdots.$
	
	Some properties of a numerical semigroup, such as symmetry, can be captured in 
	terms of its semigroup polynomial. We need some notation in order to characterize symmetry in this way.	A numerical semigroup $S$ is \emph{symmetric} if for every integer $n$ we have that $n \in S$ if and only if $\mathrm{F}(S)-n \notin S$. A polynomial $f(x) = \sum_{i=0}^d \alpha_i x^i \in \mathbb{Z}[x]$ is \emph{palindromic} (or \emph{self-reciprocal}) if $f(x) = x^d f(x^{-1})$, that is, its coefficients $\alpha_i$ satisfy the relation $\alpha_i=\alpha_{d-i}$.
	
	\begin{theorem}{\rm \cite[Theorem 5]{moree2014numerical}.}\label{symmetrici iff palindromic}
		A numerical semigroup $S$ is symmetric if and only if $\mathrm{P}_S$ is palindromic.
	\end{theorem}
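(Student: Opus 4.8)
The plan is to express both properties in terms of the indicator function of $S$ and to show that each is equivalent to the single identity $\mathbf{1}_S(n) + \mathbf{1}_S(F-n) = 1$, where $F = \mathrm{F}(S)$ and $\mathbf{1}_S(k)$ denotes the indicator that $k \in S$, with the convention $\mathbf{1}_S(k) = 0$ for $k < 0$. Writing $d = \deg \mathrm{P}_S = F+1$ and $\mathrm{P}_S(x) = \sum_{k=0}^d c_k x^k$, the relation $\mathrm{P}_S = (1-x)\mathrm{H}_S$ yields at once the coefficient formula $c_k = \mathbf{1}_S(k) - \mathbf{1}_S(k-1)$ and, by telescoping, the partial-sum formula $\sum_{j=0}^k c_j = \mathbf{1}_S(k)$ for $0 \le k \le d$. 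On the other side, the definition of symmetry is literally $\mathbf{1}_S(n) + \mathbf{1}_S(F-n) = 1$ for every integer $n$; one checks this holds automatically for $n < 0$ and $n > F$ (there $F-n$ exceeds $F$, hence lies in $S$), so only the range $0 \le n \le F$ carries content.

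For the forward implication I would assume symmetry and compute directly: by the coefficient formula, $c_{d-k} = \mathbf{1}_S(F+1-k) - \mathbf{1}_S(F-k)$, and applying the symmetry identity with $n = k-1$ and with $n = k$ rewrites this as $(1-\mathbf{1}_S(k-1)) - (1-\mathbf{1}_S(k)) = \mathbf{1}_S(k) - \mathbf{1}_S(k-1) = c_k$, so $\mathrm{P}_S$ is palindromic.

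For the converse I would assume $c_k = c_{d-k}$ and run the partial-sum formula in reverse: re-indexing $j \mapsto d-j$ gives $\mathbf{1}_S(F-k) = \sum_{j=0}^{F-k} c_j = \sum_{j=0}^{F-k} c_{d-j} = \sum_{i=k+1}^{d} c_i$, whence $\mathbf{1}_S(k) + \mathbf{1}_S(F-k) = \sum_{j=0}^d c_j = \mathrm{P}_S(1)$; evaluating the polynomial form $\mathrm{P}_S(x) = 1 + (x-1)\sum_{g \in \mathbb{N} \setminus S} x^g$ at $x=1$ gives $\mathrm{P}_S(1) = 1$, which is exactly the symmetry identity. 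I do not expect any serious obstacle here, as the argument is short and essentially bookkeeping; the two points demanding care are keeping the boundary convention $\mathbf{1}_S(-1) = 0$ consistent throughout, and reading off $\mathrm{P}_S(1) = 1$ from the polynomial expression rather than from $(1-x)\mathrm{H}_S(x)$, where the vanishing factor $(1-x)$ meets the divergence of $\mathrm{H}_S(1)$ in an indeterminate $0 \cdot \infty$.
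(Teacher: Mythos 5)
The paper does not prove this statement; it is quoted from \cite[Theorem 5]{moree2014numerical}, so there is no in-paper proof to compare against. Your argument is correct and self-contained: the coefficient identity $c_k=\mathbf{1}_S(k)-\mathbf{1}_S(k-1)$, the telescoped partial sums $\sum_{j\le k}c_j=\mathbf{1}_S(k)$, and the normalization $\mathrm{P}_S(1)=1$ do reduce both palindromicity and symmetry to the single identity $\mathbf{1}_S(n)+\mathbf{1}_S(\mathrm{F}(S)-n)=1$, and the boundary cases ($k=0$, $k=d$, and $n$ outside $[0,\mathrm{F}(S)]$) are handled consistently with the convention $\mathbf{1}_S(-1)=0$. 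This is the same elementary bookkeeping as in the cited source, which phrases it instead as a comparison of $x^{\mathrm{F}(S)+1}\mathrm{P}_S(1/x)$ with $\mathrm{P}_S(x)$ via the gap set; no gap to report.
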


	\subsection{Complete intersection numerical semigroups} \label{sec:pre:gluings}
	
	Let $S_1,S_2$ and $S$ be numerical semigroups, and let $a_1 \in S_2$ and $a_2 \in S_1$ be coprime integers such that they are not minimal generators of their respective semigroups. We say that $S$ is a \emph{gluing} of $S_1$ and $S_2$ at $a_1a_2$ if $S=a_1 S_1 + a_2 S_2$ and we write $S = a_1S_1 +_{a_1a_2} a_2S_2$. We will keep this notation until the end of this section. By \cite[Lemma 9.8]{rosales2009numerical} we have that
	\begin{equation}\label{emdim gluing}
	\mathrm{e}(S) = \mathrm{e}(S_1) + \mathrm{e}(S_2).
	\end{equation}
	
	Let $M$ be a submonoid of $\mathbb{N}$, and let $m \in M \setminus \{0\}$. The \emph{Ap\'{e}ry set} of $M$ at $m$ is the set
	\[ \Ap(M,m) = \{ v \in M : v-m \notin M \}. \]
	Gluings can be characterised in terms of Ap\'{e}ry sets as in Lemma~\ref{Apery and gluing}.
	\begin{lemma}{\rm \cite[Theorem 9.2]{rosales2009numerical}.}\label{Apery and gluing}
		The numerical semigroup $S$ is the gluing of $S_1$ and $S_2$ at $m = a_1a_2$, if and only if the map
		\[ \Ap(a_1 S_1, m) \times \Ap(a_2 S_2, m) \rightarrow \Ap(S,m) \]
		given by $(v,w) \mapsto v+w$ is bijective. If this is the case, then we have $\Ap(S,m) = \Ap(a_1 S_1, m) + \Ap(a_2 S_2, m)$.
	\end{lemma}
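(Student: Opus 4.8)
The plan is to reduce everything to the structure of Apéry sets modulo $m = a_1 a_2$. The first step is to identify the two factor Apéry sets explicitly. Since $a_2 \in S_1$ we have $m = a_1 a_2 \in a_1 S_1$, and for $s \in S_1$ the element $a_1 s$ lies in $\Ap(a_1 S_1, m)$ exactly when $a_1 s - a_1 a_2 = a_1(s - a_2) \notin a_1 S_1$, i.e.\ when $s - a_2 \notin S_1$. Hence I expect
\[ \Ap(a_1 S_1, m) = a_1 \Ap(S_1, a_2), \qquad \Ap(a_2 S_2, m) = a_2 \Ap(S_2, a_1), \]
the first set having $a_2$ elements and the second $a_1$ elements, so that the domain $\Ap(a_1 S_1,m) \times \Ap(a_2 S_2,m)$ has $a_1 a_2 = m$ elements, matching $\lvert \Ap(S,m)\rvert = m$.

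Second, I would record the two elementary facts that drive the argument: (i) for any submonoid $M \subseteq \mathbb{N}$ with $m \in M$ one has $M = \Ap(M,m) + \mathbb{N}m$ (subtract $m$ repeatedly until leaving $M$); and (ii) the map $\phi(v,w) = v+w$ is injective, because writing $v = a_1 t_1$, $v' = a_1 t_1'$ with $t_1,t_1' \in \Ap(S_1,a_2)$ and similarly in the second coordinate, an equality $v+w = v'+w'$ forces $a_1(t_1 - t_1') = a_2(t_2' - t_2)$; coprimality of $a_1,a_2$ then gives $t_1 \equiv t_1' \pmod{a_2}$, and since $\Ap(S_1,a_2)$ meets each residue class mod $a_2$ exactly once, $t_1 = t_1'$ and hence $v = v'$, $w = w'$.

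For the backward implication I would argue cleanly with (i): if $\phi$ is onto $\Ap(S,m)$, then its image, which is the sumset $\Ap(a_1 S_1,m) + \Ap(a_2 S_2,m)$, equals $\Ap(S,m)$, whence
\[ a_1 S_1 + a_2 S_2 = \big(\Ap(a_1 S_1,m) + \mathbb{N}m\big) + \big(\Ap(a_2 S_2,m) + \mathbb{N}m\big) = \Ap(S,m) + \mathbb{N}m = S, \]
using $\mathbb{N}m + \mathbb{N}m = \mathbb{N}m$ and (i) applied to $S$. The final assertion $\Ap(S,m) = \Ap(a_1 S_1,m) + \Ap(a_2 S_2,m)$ is then just the statement that the image of $\phi$ is all of $\Ap(S,m)$.

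The forward implication is where the real work lies, and I expect it to be the main obstacle. Assuming $S = a_1 S_1 + a_2 S_2$, the inclusions $a_1 S_1, a_2 S_2 \subseteq S$ give $\phi(v,w) \in S$; combined with injectivity from (ii) and the cardinality count, it then suffices to show $\phi(v,w) \in \Ap(S,m)$, i.e.\ $v + w - m \notin S$. Writing $v = a_1 t_1$, $w = a_2 t_2$ with $t_1 \in \Ap(S_1,a_2)$, $t_2 \in \Ap(S_2,a_1)$, I would suppose for contradiction that $v + w - m = a_1 s_1 + a_2 s_2$ with $s_i \in S_i$. Rearranging to $a_1(t_1 - s_1) = a_2(a_1 - t_2 + s_2)$ and using $\gcd(a_1,a_2)=1$ yields $t_1 - s_1 = a_2 k$ and $t_2 - s_2 = a_1(1-k)$ for some integer $k$. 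If $k \ge 1$, then $t_1 - a_2 = s_1 + a_2(k-1) \in S_1$ (using $a_2 \in S_1$), contradicting $t_1 \in \Ap(S_1,a_2)$; if $k \le 0$, then symmetrically $t_2 - a_1 = s_2 + a_1(-k) \in S_2$, contradicting $t_2 \in \Ap(S_2,a_1)$. This contradiction closes the forward direction, and the whole proof hinges on this coprimality-driven case split.
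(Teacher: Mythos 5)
The paper does not prove this lemma; it is imported verbatim as \cite[Theorem 9.2]{rosales2009numerical}, so there is no in-paper argument to compare against. Your proof is correct and complete: the identifications $\Ap(a_iS_i,m)=a_i\Ap(S_j,a_j)$, the counting argument $|\Ap(a_1S_1,m)|\cdot|\Ap(a_2S_2,m)|=a_1a_2=|\Ap(S,m)|$ combined with unconditional injectivity via coprimality, the decomposition $M=\Ap(M,m)+\mathbb{N}m$ for the backward direction, and the case split on $k$ showing $v+w-m\notin S$ in the forward direction all check out; this is essentially the standard argument found in the cited reference.
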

	
	Gluings can also be characterized in terms of semigroup polynomials on using  
	\begin{equation} \label{eq:apery-hilbert}
	\sum_{w \in \Ap(S,m)} x^w = \left( 1-x^m \right) \mathrm{H}_S(x),
	\end{equation}
	which follows from $S = \Ap(S,m) + m \mathbb{N}$ \cite[Lemma 2.6]{rosales2009numerical}. 
	
	\begin{proposition}[{\cite[Corollary 4.4]{assi2015frobenius}}] \label{gluing iff Hilbert}
		The following statements are equivalent:
		\begin{enumerate}
			\item $S = a_1 S_1 +_{a_1a_2} a_2 S_2$;
			\item $\mathrm{H}_S(x) = (1-x^{a_1a_2})\mathrm{H}_{S_1}(x^{a_1})\mathrm{H}_{S_2}(x^{a_2})$;
			\item $\mathrm{P}_S(x) = \mathrm{P}_{\langle a_1,a_2 \rangle}(x)\mathrm{P}_{S_1}(x^{a_1})\mathrm{P}_{S_2}(x^{a_2})$.
		\end{enumerate}
	\end{proposition}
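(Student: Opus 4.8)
The plan is to establish the chain of equivalences by proving $(1)\Leftrightarrow(2)$ and then $(2)\Leftrightarrow(3)$, working throughout in the power series ring $\mathbb{Z}[\![x]\!]$, in which $1-x$ and $1-x^{m}$ are invertible and may therefore be cancelled freely. The two cited ingredients I would lean on are Lemma~\ref{Apery and gluing}, which reduces the gluing condition $(1)$ to the bijectivity of an addition map between Ap\'ery sets, and the identity~\eqref{eq:apery-hilbert}, which converts Ap\'ery-set generating functions into Hilbert series.

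For $(1)\Leftrightarrow(2)$ I would first observe that $m=a_1a_2$ lies in both $a_1S_1$ and $a_2S_2$, since $a_2\in S_1$ and $a_1\in S_2$; hence~\eqref{eq:apery-hilbert} applies to the submonoids $a_1S_1$ and $a_2S_2$ as well as to $S$, because the decomposition $M=\Ap(M,m)+m\mathbb{N}$ is valid for any submonoid $M\subseteq\mathbb{N}$ containing $m$. Using $\mathrm{H}_{a_iS_i}(x)=\mathrm{H}_{S_i}(x^{a_i})$, this yields
\[
\sum_{v\in\Ap(a_1S_1,m)}x^v=(1-x^{m})\mathrm{H}_{S_1}(x^{a_1}),\qquad
\sum_{w\in\Ap(a_2S_2,m)}x^w=(1-x^{m})\mathrm{H}_{S_2}(x^{a_2}),
\]
together with $\sum_{u\in\Ap(S,m)}x^u=(1-x^{m})\mathrm{H}_S(x)$. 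By Lemma~\ref{Apery and gluing}, $(1)$ holds precisely when the map $(v,w)\mapsto v+w$ is a bijection onto $\Ap(S,m)$, and this is equivalent to the generating-function identity
\[
\Big(\sum_{v\in\Ap(a_1S_1,m)}x^v\Big)\Big(\sum_{w\in\Ap(a_2S_2,m)}x^w\Big)=\sum_{u\in\Ap(S,m)}x^u .
\]
Substituting the three expressions above and cancelling one factor of $(1-x^{m})$ gives exactly $\mathrm{H}_S(x)=(1-x^{a_1a_2})\mathrm{H}_{S_1}(x^{a_1})\mathrm{H}_{S_2}(x^{a_2})$, which is $(2)$.

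For $(2)\Leftrightarrow(3)$ I would multiply $(2)$ by $(1-x)$ and invoke $\mathrm{P}_T(x)=(1-x)\mathrm{H}_T(x)$ from~\eqref{flauw}. Writing $\mathrm{H}_{S_i}(x^{a_i})=\mathrm{P}_{S_i}(x^{a_i})/(1-x^{a_i})$ and using the closed form $\mathrm{P}_{\langle a_1,a_2\rangle}(x)=\tfrac{(1-x)(1-x^{a_1a_2})}{(1-x^{a_1})(1-x^{a_2})}$ --- which itself follows from~\eqref{eq:apery-hilbert} applied to $\langle a_1,a_2\rangle$ with its standard Ap\'ery set $\{0,a_2,2a_2,\dots,(a_1-1)a_2\}$ at $a_1$ (here the coprimality built into the gluing setup is used) --- the right-hand side rearranges as
\[
(1-x)(1-x^{a_1a_2})\,\frac{\mathrm{P}_{S_1}(x^{a_1})}{1-x^{a_1}}\,\frac{\mathrm{P}_{S_2}(x^{a_2})}{1-x^{a_2}}
=\mathrm{P}_{\langle a_1,a_2\rangle}(x)\,\mathrm{P}_{S_1}(x^{a_1})\,\mathrm{P}_{S_2}(x^{a_2}),
\]
which is $(3)$. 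Since every manipulation is reversible in $\mathbb{Z}[\![x]\!]$, the converse implication is immediate.

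I expect the only genuinely delicate point to be the translation of the set-theoretic bijection in Lemma~\ref{Apery and gluing} into the generating-function identity. Here one must check both directions: the coefficient of $x^u$ in the product counts the pairs $(v,w)$ with $v+w=u$, whereas the target series $\sum_{u\in\Ap(S,m)}x^u$ is multiplicity-free with support exactly $\Ap(S,m)$. Equality of the two series therefore forces the addition map to be injective, to have image contained in $\Ap(S,m)$, and to hit each element of $\Ap(S,m)$ once --- that is, to be a bijection onto $\Ap(S,m)$ --- and conversely. Everything else is formal power-series bookkeeping and the cancellation of the invertible factors $1-x$ and $1-x^{m}$.
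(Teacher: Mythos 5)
Your proof is correct and follows essentially the same route as the paper's: Lemma~\ref{Apery and gluing} combined with \eqref{eq:apery-hilbert} gives $(1)\Leftrightarrow(2)$, and $(2)\Leftrightarrow(3)$ is the formal power-series manipulation via \eqref{embedis2}. The only (harmless) differences are that you apply the Ap\'ery-set identity directly to the submonoids $a_iS_i$ rather than rescaling to $\Ap(S_i,a_j)$ as the paper does, and that you spell out the multiplicity-free counting argument and the closed form of $\mathrm{P}_{\langle a_1,a_2\rangle}$, which the paper leaves implicit.
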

	\begin{proof}
		The equivalence of $(2)$ and $(3)$ is trivial, so we just need to prove the equivalence of $(1)$ and $(2)$. From 
		Lemma~\ref{Apery and gluing} we infer that $S = a_1 S_1 +_{a_1a_2} a_2 S_2$ if and only if
		\[ \begin{split}
		\sum_{w \in \Ap(S,a_1a_2)} x^w = \sum_{w_1 \in \Ap(a_1S_1,a_1a_2)}\sum_{w_2 \in \Ap(a_2S_2,a_1a_2)} x^{w_1+w_2} = \\
		= \left( \sum_{w_1 \in \Ap(S_1,a_2)} x^{a_1w_1} \right) \left( \sum_{w_2 \in \Ap(S_2,a_1)} x^{a_2w_2} \right).
		\end{split} \]
		Now apply \eqref{eq:apery-hilbert} and divide both sides by 
		$1-x^{a_1a_2}.$ 
	\end{proof}
	
	Complete intersection numerical semigroups are usually introduced in the context of minimal presentations \cite[Chapter 8]{rosales2009numerical}, where a numerical semigroup $S$ is said to be \emph{complete intersection} if the cardinality of a minimal presentation of $S$ is equal to $\mathrm{e}(S)-1$. In this paper we will only need the recursive characterisation of complete intersection numerical semigroups in terms of gluings. 
	
	\begin{theorem}{\rm \cite[Theorem 9.10]{rosales2009numerical}.}\label{gluing ci}
		A numerical semigroup is complete intersection if and only if $S$ is $\mathbb{N}$ or $S$ is a gluing of two complete intersection numerical semigroups.
	\end{theorem}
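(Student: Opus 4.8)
The plan is to read the complete intersection property off the minimal presentation of $S$. Write $\mu(S)$ for the cardinality of a minimal presentation, so that by definition $S$ is complete intersection precisely when $\mu(S) = \mathrm{e}(S)-1$. Throughout I would use the universal lower bound $\mu(S) \ge \mathrm{e}(S)-1$, which holds because the defining toric ideal of $S$ has height $\mathrm{e}(S)-1$; complete intersection is exactly the case of equality.

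For the implication from gluings to complete intersections, the base case $S=\mathbb{N}=\langle 1\rangle$ is immediate, since it has embedding dimension $1$ and empty minimal presentation. Now suppose $S = a_1 S_1 +_{a_1a_2} a_2 S_2$ is a gluing of complete intersections. The minimal generators of $S$ are $a_1$ times those of $S_1$ together with $a_2$ times those of $S_2$. I would build a presentation of $S$ by lifting minimal presentations of $S_1$ and $S_2$ to relations among these generators and adjoining the single gluing relation that equates the two expressions for $a_1a_2$ coming from $a_2 \in S_1$ and $a_1 \in S_2$. The key point is that this set is a minimal presentation, so that $\mu(S)=\mu(S_1)+\mu(S_2)+1$: the Ap\'{e}ry-set bijection of Lemma~\ref{Apery and gluing} shows that any two factorizations of an element of $S$ become equal after normalizing each block separately and using the gluing relation at most once, while a degree count shows no relation is redundant. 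Combining this with $\mathrm{e}(S)=\mathrm{e}(S_1)+\mathrm{e}(S_2)$ from \eqref{emdim gluing} and $\mu(S_i)=\mathrm{e}(S_i)-1$ gives $\mu(S)=\mathrm{e}(S)-1$, as wanted; alternatively the same bookkeeping can be carried out on Hilbert series through Proposition~\ref{gluing iff Hilbert}.

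For the converse I must show that a complete intersection $S$ with $\mathrm{e}(S)=e\ge 2$ is a gluing of two numerical semigroups; the complete intersection property of the two pieces is then automatic, since $\mu(S)=\mu(S_1)+\mu(S_2)+1$ together with $\mu(S)=e-1$, $\mathrm{e}(S_1)+\mathrm{e}(S_2)=e$ and the universal bound $\mu(S_i)\ge \mathrm{e}(S_i)-1$ forces equality in both. To find the gluing I would use the graph description of minimal presentations: for $s\in S$ let $G_s$ be the graph on the generators $n_i$ with $s-n_i\in S$, joining $n_i$ and $n_j$ when $s-n_i-n_j\in S$, and recall that $\mu(S)=\sum_s(\mathrm{nc}(G_s)-1)$, summed over the finitely many Betti elements where $G_s$ is disconnected. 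When $S$ is complete intersection this sum equals $e-1$, which is exactly enough to bind the $e$ generators into a tree: each relation joins two previously separate groups of generators, and $e-1$ of them produce a connected, acyclic pattern. Removing one edge of this tree splits the generators into two nonempty blocks $A\sqcup B$, and this is my candidate gluing; the binomial relation crossing the removed edge exhibits a common value $c=\sum_{A}\alpha_i n_i=\sum_{B}\beta_j n_j$ lying in $\langle A\rangle\cap\langle B\rangle$.

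The main obstacle is this converse, and specifically turning the ``tree of relations'' heuristic into a genuine gluing. I must verify that the split $A\sqcup B$ satisfies $\gcd(\gcd A,\gcd B)=1$ and that $c$ generates $\langle A\rangle\cap\langle B\rangle$, which by Lemma~\ref{Apery and gluing} is equivalent to the Ap\'{e}ry-set map being bijective; the danger is that a careless choice of edge leaves the two blocks sharing a common factor or fails the intersection condition. I expect the cleanest route is to choose the crossing relation to come from a minimal nonzero element of $\langle A\rangle\cap\langle B\rangle$, and to argue that any defect would let one delete a relation from one block, contradicting the universal bound $\mu\ge\mathrm{e}-1$ applied to that block. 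Making this distribution-of-relations argument precise, so that acyclicity really yields a coprime splitting, is the delicate step.
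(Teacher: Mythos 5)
The paper does not prove this statement at all: it is imported verbatim from \cite[Theorem 9.10]{rosales2009numerical} (the result goes back to Delorme) and used as a black box, so the only meaningful comparison is with the proof in that source, which follows the same general strategy you outline: minimal presentations, the universal bound $\mu(S)\ge \mathrm{e}(S)-1$, and, for the converse, extracting a partition of the generators from a minimal presentation of cardinality $\mathrm{e}(S)-1$.

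Your forward direction is essentially fine: once the lifted presentations of $S_1$ and $S_2$ together with the single gluing relation are known to present $S$ (which is the defining property of a gluing at the level of congruences), the universal bound already forces $\mu(S)=\mathrm{e}(S)-1$, so the minimality claim you labour over is not even needed. The genuine gap is in the converse, and although you have located it yourself, you have not closed it, and it is the entire content of the theorem. Two distinct difficulties are elided. First, a minimal relation is not an edge between two generators: each of its two sides may involve several generators, and nothing in the ``tree'' heuristic prevents the relations other than the chosen crossing one from having support straddling both blocks $A$ and $B$; one must prove that a minimal presentation of a complete intersection can be arranged so that exactly one relation crosses and every other relation is supported entirely inside $A$ or entirely inside $B$. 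Second, even granting such a partition, the verification that $\gcd(\gcd A,\gcd B)=1$, that the crossing value $c$ generates $\langle A\rangle\cap\langle B\rangle$, and hence that the Ap\'ery map of Lemma~\ref{Apery and gluing} is bijective, is where the induction on $\mathrm{e}(S)$ and the arithmetic actually take place; your proposal defers exactly this to a hoped-for ``distribution-of-relations'' argument. As written it is a correct plan whose decisive lemma is left as an acknowledged conjecture, so it does not yet constitute a proof; if the theorem is only needed as input (as in this paper), citing \cite[Theorem 9.10]{rosales2009numerical} is the appropriate course.
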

	
	Using this characterisation in terms of gluings along with Proposition~\ref{gluing iff Hilbert} one can determine the Hilbert series of $S$ as follows.
	
	\begin{corollary}[{\cite[Theorem 4.8]{assi2015frobenius}}]\label{complete intersection hilbert series}
		Let $S = \langle n_1,\dots,n_e \rangle$ be a complete intersection numerical semigroup. Then there are $d_1, \ldots, d_{e-1} \in S \setminus \{n_1, \ldots, n_e\}$ such that
		\[ \mathrm{H}_S(x) = \frac{(1-x^{d_1}) \dots (1-x^{d_{e-1}})}{(1-x^{n_1}) \dots (1-x^{n_e})}. \]
	\end{corollary}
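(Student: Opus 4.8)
The plan is to induct on the embedding dimension $\mathrm{e}(S) = e$, using the recursive description of complete intersections in Theorem~\ref{gluing ci} together with the Hilbert series formula for gluings from Proposition~\ref{gluing iff Hilbert}. For the base case $e = 1$, the only numerical semigroup with a single minimal generator is $S = \mathbb{N} = \langle 1 \rangle$, for which $\mathrm{H}_S(x) = 1/(1-x)$; here the numerator is the empty product, and the claimed formula holds with no $d_i$'s.

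For the inductive step, suppose $e \ge 2$. By Theorem~\ref{gluing ci}, $S$ is a gluing $S = a_1 S_1 +_{a_1 a_2} a_2 S_2$ of two complete intersection numerical semigroups $S_1 = \langle u_1, \dots, u_{e_1}\rangle$ and $S_2 = \langle w_1, \dots, w_{e_2}\rangle$. By \eqref{emdim gluing} we have $e = e_1 + e_2$, and since each $e_i \ge 1$ both are strictly smaller than $e$, so the induction hypothesis applies to $S_1$ and $S_2$, expressing $\mathrm{H}_{S_1}$ and $\mathrm{H}_{S_2}$ in the desired form. Substituting these into the identity $\mathrm{H}_S(x) = (1-x^{a_1 a_2}) \mathrm{H}_{S_1}(x^{a_1}) \mathrm{H}_{S_2}(x^{a_2})$ provided by the second item of Proposition~\ref{gluing iff Hilbert}, and collecting factors, yields an expression of the right shape: the denominator becomes $\prod_i (1-x^{a_1 u_i}) \prod_l (1-x^{a_2 w_l})$, whose exponents are the $e_1 + e_2 = e$ generators of $S = \langle a_1 u_1, \dots, a_1 u_{e_1}, a_2 w_1, \dots, a_2 w_{e_2}\rangle$; since this generating set has size $\mathrm{e}(S)$, by \eqref{emdim gluing} it is the minimal generating system. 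The numerator is the product of $(1-x^{a_1 a_2})$ with the $a_1$-scaled and $a_2$-scaled numerator factors coming from $S_1$ and $S_2$, giving $1 + (e_1 - 1) + (e_2 - 1) = e - 1$ factors in total.

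The step requiring care, and the main (if modest) obstacle, is to check that each numerator exponent lies in $S \setminus \{n_1, \dots, n_e\}$. For the exponent $a_1 a_2$: since $a_2 \in S_1$ we have $a_1 a_2 \in a_1 S_1 \subseteq S$, and because $a_2$ is not a minimal generator of $S_1$ (and is nonzero), $a_1 a_2$ is a nontrivial combination of the generators $a_1 u_i$, so it is not a minimal generator of $S$. For an $a_1$-scaled exponent $a_1 c$ with $c \in S_1 \setminus \{u_1, \dots, u_{e_1}\}$ (and $c > 0$, since $1 - x^c$ is a genuine factor), we likewise have $a_1 c \in S$, and as $c$ is a nontrivial combination of the $u_i$, the element $a_1 c$ is not a minimal generator of $S$; the $a_2$-scaled exponents are handled symmetrically. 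This verifies that all $e-1$ numerator exponents belong to $S \setminus \{n_1, \dots, n_e\}$, establishing the inductive step and completing the proof.
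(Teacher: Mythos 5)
Your proof is correct and follows exactly the route the paper indicates (it cites \cite[Theorem 4.8]{assi2015frobenius} and notes the result follows from the gluing characterisation of Theorem~\ref{gluing ci} combined with Proposition~\ref{gluing iff Hilbert}): induction on $\mathrm{e}(S)$ via gluings, with the careful check that the numerator exponents are non-minimal-generator elements of $S$. Nothing to add.
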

	
	The integers $d_i$ in the previous result are actually the \emph{Betti elements} of $S$ (with multiplicity). We refer to \cite[Chapter 7]{rosales2009numerical} for a definition of Betti element, which will be needed only in Section \ref{sec:open-problems}. If $\mathrm{e}(S)=2$, so $S = \langle a,b \rangle = a \mathbb{N} +_{ab} b \mathbb{N}$, then Corollary~\ref{complete intersection hilbert series} and the fact that $\deg \mathrm{P}_S = \mathrm{F}(S) = (a-1)(b-1)$ yield
	\begin{equation}\label{embedis2}
	\mathrm{P}_{\langle a,b\rangle}(x)=\frac{(1-x)(1-x^{ab})}{(1-x^a)(1-x^b)}.
	\end{equation}
	In the case when $\mathrm{e}(S) = 3$, we have the following result due to Herzog.
	
	\begin{theorem}{\rm \cite[Theorem 4.2.1]{herzog1970generators}.} \label{thm:e3}
		Let $S$ be a numerical semigroup. If $\mathrm{e}(S) = 3$,  then $S$ is complete intersection if and only if it is symmetric.
	\end{theorem}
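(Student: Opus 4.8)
\emph{Proof plan.} The forward implication needs no hypothesis on the embedding dimension and is immediate from the chain of implications \eqref{implications}: a complete intersection is cyclotomic, and a cyclotomic numerical semigroup is symmetric. Hence the whole content is the reverse implication, and this is where essentially all the work lies. Write $S=\langle n_1,n_2,n_3\rangle$ with the $n_i$ forming the minimal generating system. By \eqref{emdim gluing} and Theorem~\ref{gluing ci}, a $3$-generated numerical semigroup is a complete intersection exactly when it is a gluing, and then, since the embedding dimensions of the two glued pieces add up to $3$, it is a gluing of a $2$-generated semigroup (a complete intersection, being a gluing of two copies of $\mathbb{N}$) with $\mathbb{N}$. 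Unwinding the definition of gluing, this happens iff, after relabelling, $d:=\gcd(n_1,n_2)\geq 2$ and $n_3\in\langle n_1/d,\,n_2/d\rangle$. So the reverse implication reduces to the assertion that \emph{a symmetric $3$-generated numerical semigroup is a gluing.}

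To establish this I would analyse the relations among the generators. For each $i$ let $c_i$ be the least positive integer with $c_i n_i\in\langle n_j:j\neq i\rangle$, and write $c_i n_i=r_{ij}n_j+r_{ik}n_k$ with $r_{ij},r_{ik}\geq 0$. Two cases arise. In the first, some relation involves only one other generator, say $c_i n_i=r_{ij}n_j$; a short computation then shows $\gcd(n_i,n_j)\geq 2$ and $n_k$ lies in the corresponding reduced semigroup, so $S$ is a gluing and we are done. In the second case every $r_{ij}>0$, so all three relations genuinely involve all three generators, and one checks that they then form the minimal presentation with $c_i=r_{ji}+r_{ki}$:
\[
c_1 n_1 = r_{12} n_2 + r_{13} n_3, \qquad c_2 n_2 = r_{21} n_1 + r_{23} n_3, \qquad c_3 n_3 = r_{31} n_1 + r_{32} n_2.
\]

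The main obstacle is to show that this second case is incompatible with symmetry. Here I would pass to the Ap\'{e}ry set $\Ap(S,n_1)$, which has $n_1$ elements, each uniquely of the form $a n_2+b n_3$ with $(a,b)$ ranging over a staircase-shaped region of $\mathbb{N}^2$ cut out by the relations above. When all $r_{ij}>0$ this region has two ``outer corners'', and the two associated elements are distinct maximal elements of $\Ap(S,n_1)$ for the order $u\preceq v\iff v-u\in S$. From them one reads off an integer $n$ with $n\notin S$ and $\mathrm{F}(S)-n\notin S$, contradicting the definition of symmetry; equivalently, via Theorem~\ref{symmetrici iff palindromic}, one exhibits a non-palindromic coefficient of $\mathrm{P}_S$. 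The delicate point is the combinatorial verification that the region really has exactly two maximal elements in the second case (and exactly one in the gluing case); once this count is secured, symmetry forces the first case, and with it the complete intersection property.
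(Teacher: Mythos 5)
This theorem is not proved in the paper: it is imported verbatim from Herzog \cite{herzog1970generators} and used as a black box, so there is no in-paper argument to compare yours against. Your plan is the classical route to Herzog's result. The forward direction via \eqref{implications} is correct (and indeed independent of $\mathrm{e}(S)$), and your reduction of the converse to ``symmetric and $3$-generated implies gluing'', using Theorem~\ref{gluing ci} together with \eqref{emdim gluing} and the fact that every $2$-generated numerical semigroup is a complete intersection, is sound. The subsequent analysis of the relations $c_i n_i = r_{ij}n_j + r_{ik}n_k$ and of the staircase structure of $\Ap(S,n_1)$ is exactly how Herzog (and, in modern form, Rosales--Garc\'ia-S\'anchez) establish the theorem: in the non-gluing case $\Ap(S,n_1)$ has two maximal elements with respect to $\preceq$, so the type of $S$ is $2$, whereas symmetric semigroups are exactly those of type $1$.

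That said, what you have written is a proof plan rather than a proof: both pivotal verifications are explicitly deferred. In your first case the ``short computation'' is not as short as suggested; from $c_i n_i = r_{ij}n_j$ and the minimality of $c_i$ one does get $c_i = n_j/d$ and $r_{ij} = n_i/d$ with $d=\gcd(n_i,n_j)$, but ruling out $d=1$ (and handling the possible non-uniqueness of representations of $c_i n_i$, on which your case split silently depends) requires a genuine argument. In your second case the count of maximal elements of the staircase region is the heart of Herzog's proof and is precisely what you leave unverified. So the strategy is the standard, correct one, but these two steps must be filled in before the submission counts as a proof of the cited result.
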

	
	\subsection{Cyclotomic polynomials}
	\label{subsec:cyclotomic}
	
	For an introduction to cyclotomic polynomials, see \cite{thangadurai2000coefficients}.

	From the definition (\ref{cyclotomic polynomial}) we have
	\begin{equation}\label{xn-1}
	x^n-1 = \prod_{d \mid n} \Phi_d(x).
	\end{equation}
	This in combination with \eqref{embedis2} yields for example that
	\begin{equation}
	\label{e=2factorincyclotomics}
	\mathrm{P}_{\langle a,b\rangle}=\prod_{\substack{d|ab,~d\nmid a,~d\nmid b}}\Phi_{d}.
	\end{equation}
	An important property of the cyclotomic polynomials is that they are irreducible over the rationals, several
	famous mathematicians gave different proofs of this, cf. Weintraub \cite{Wein}. Hence, \eqref{xn-1} gives the factorization
	of $x^n-1$ into irreducibles.
	
	By the so called \emph{M\"{o}bius inversion formula} (see \cite[Proposition 3.7.1]{stanley2011enumerative}) we
	infer from \eqref{xn-1} that
	\begin{equation}\label{Mobius inversion formula}
	\Phi_n(x) = \prod_{d \mid n} (x^d-1)^{\mu(n/d)},
	\end{equation}
	where the \emph{M\"{o}bius function} $\mu$ is defined by
	\[ \mu(n) = \begin{cases}
	1 & n=1;\\
	(-1)^k & n = p_1 \cdots p_k \text{ with the }p_i \text{ distinct primes};\\
	0 & \text{otherwise}.
	\end{cases} \]
	By taking degrees in \eqref{Mobius inversion formula} we obtain $\varphi(n)=\sum_{d|n}d\mu(n/d)$.
	If $n > 1$, then $\sum_{d \mid n} \mu(n/d) = 0$, so equation (\ref{Mobius inversion formula}) can be rewritten as
	\begin{equation}\label{Mobius inversion formula2}
	\Phi_n(x) = \prod_{d \mid n} (1-x^d)^{\mu(n/d)}.
	\end{equation}
	Recall that a polynomial $f$ of degree $d$ is palindromic if $f(x) = x^d f(x^{-1})$. 
	As, for $n>1$,
	\begin{equation} 
	\label{eq:reci}
	x^{\varphi(n)}\Phi_n \left( \frac{1}{x} \right) = x^{\sum_{d|n}d\mu(n/d)}\prod_{d \mid n} \left( 1-\frac{1}{x^d} \right)^{\mu(n/d)}=\prod_{d \mid n} (x^d-1)^{\mu(n/d)}=\Phi_n(x),
	\end{equation}
	we see that $\Phi_n(x)$ is palindromic for $n>1$.
	
	It follows from \eqref{Mobius inversion formula2} that for $n>1$ we have $\Phi_n(0)=1$ and
	\begin{equation} 
	\label{eq:first-cyclo-coeff}
	\Phi_n(x) \equiv 1-\mu(n)x\,\,({\rm mod ~}x^2).
	\end{equation}
	Let $r$ be any natural number. Using \eqref{Mobius inversion formula} and the fact that the M\"obius function is only supported on squarefree integers, we see that
	$$
	\Phi_{nr^2}(x) = \prod_{d \mid nr^2} (x^{nr^2/d}-1)^{\mu(d)}=
	\prod_{d \mid nr} ((x^r)^{nr/d}-1)^{\mu(d)},
	$$
	and so
	\begin{equation}\label{eq:cyc property}
	\Phi_{nr^2}(x) = \Phi_{nr}(x^r).
	\end{equation}

	\subsection{Cyclotomic numerical semigroups}\label{Section 2.4}
	
	A numerical semigroup $S$ is \emph{cyclotomic} if $\mathrm{P}_S$ is a product of cyclotomic polynomials, that is, $\mathrm{P}_S=\prod_{d \in \mathcal{D}} \Phi_d^{f_d}$, for some finite set $\mathcal{D}$, with 
	$f_d\ge 1$. As $\mathrm{P}_S(1)=1,$ $\Phi_1$ does not appear in this product. \\
	\indent Corollary~\ref{complete intersection hilbert series} gives rise to the following result of Ciolan et al.\,\cite{ciolan2016cyclotomic}.

	\begin{corollary}\label{complete intersection is cyclotomic}
		Every complete intersection numerical semigroup is cyclotomic.
	\end{corollary}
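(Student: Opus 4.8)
The plan is to read the cyclotomic factorization straight off the rational expression for the Hilbert series supplied by Corollary~\ref{complete intersection hilbert series}. Writing $S = \langle n_1,\dots,n_e\rangle$, that corollary gives $d_1,\dots,d_{e-1}$ with
\[ \mathrm{H}_S(x) = \frac{\prod_{i=1}^{e-1}(1-x^{d_i})}{\prod_{j=1}^{e}(1-x^{n_j})}, \]
and combining this with $\mathrm{P}_S(x) = (1-x)\mathrm{H}_S(x)$ from \eqref{flauw} presents $\mathrm{P}_S$ as a quotient of products of terms of the shape $1-x^m$. The first step is therefore to factor each such term into cyclotomics. From \eqref{xn-1} we have $x^m-1 = \prod_{d\mid m}\Phi_d$, and isolating the factor $\Phi_1 = x-1$ yields the identity $1-x^m = (1-x)\prod_{d\mid m,\, d>1}\Phi_d(x)$, which I would verify carries no stray sign.

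Substituting this into numerator and denominator, the essential point is to track the powers of $(1-x)$. The numerator contributes $(1-x)^{e-1}$, the denominator contributes $(1-x)^{e}$, and the prefactor in $\mathrm{P}_S = (1-x)\mathrm{H}_S$ contributes one more $(1-x)$; these balance to $(1-x)^{(e-1)+1-e}=(1-x)^0$, so the $\Phi_1$-factors and the sign cancel exactly. What remains is
\[ \mathrm{P}_S(x) = \frac{\prod_{i=1}^{e-1}\prod_{d\mid d_i,\, d>1}\Phi_d(x)}{\prod_{j=1}^{e}\prod_{d\mid n_j,\, d>1}\Phi_d(x)}, \]
a quotient of products of cyclotomic polynomials.

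To conclude I would invoke that $\mathrm{P}_S$ is a genuine polynomial (immediate from its definition, since $\mathbb{N}\setminus S$ is finite) together with the irreducibility of the $\Phi_d$ over $\mathbb{Q}$ recorded in Section~\ref{subsec:cyclotomic}. Since $\mathbb{Q}[x]$ is a unique factorization domain and each $\Phi_d$ is prime, the denominator must divide the numerator, so after cancellation $\mathrm{P}_S$ is a product of cyclotomic polynomials with nonnegative multiplicities; this is exactly the assertion that $S$ is cyclotomic. (As a sanity check, in embedding dimension $e=2$ this recovers \eqref{embedis2} and \eqref{e=2factorincyclotomics}.)

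The one place needing genuine care, and the step I would flag as the main obstacle, is the bookkeeping of the $(1-x)$ factors above: it relies on a complete intersection of embedding dimension $e$ having exactly $e-1$ gluing exponents $d_i$, so that the counts $e-1$, $e$ and the extra factor cancel to zero and no residual $\Phi_1$ (or sign) survives. The final divisibility argument is then routine given unique factorization and the known polynomiality of $\mathrm{P}_S$.
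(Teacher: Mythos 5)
Your proposal is correct and follows essentially the same route as the paper: both express $\mathrm{P}_S$ via Corollary~\ref{complete intersection hilbert series} as a quotient of products of terms $1-x^m$, factor these into cyclotomics using \eqref{xn-1}, and conclude that no negative exponent can survive because $\mathrm{P}_S$ is a polynomial. The only (cosmetic) difference is at the last step, where you invoke unique factorization in $\mathbb{Q}[x]$ while the paper observes that a negative exponent would force a pole at a root of unity; your explicit bookkeeping of the $(1-x)$ and sign cancellations is a nice touch but not a different argument.
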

	\begin{proof}
		By Corollary~\ref{complete intersection hilbert series} and \eqref{xn-1} we have $\mathrm{P}_S(x)=(1-x)\mathrm{H}_S(x)=\prod \Phi_n(x)^{f_n},$ with possibly $f_n<0.$
		However, this would imply that $\mathrm{P}_S(x)$ has a pole at $x=e^{2\pi i/n},$ contradicting the fact that $\mathrm{P}_S$ is a polynomial.
	\end{proof}

	Observe that the product of two palindromic polynomials is palindromic. Hence, by applying Theorem~\ref{symmetrici iff palindromic} and recalling that $\Phi_n$ is palindromic for 
	$n>1$ (see Section~\ref{subsec:cyclotomic}), we reach the following conclusions. 
	
	\begin{corollary}[{\cite[Theorem 1]{ciolan2016cyclotomic}}] \label{cor:symmetric}
		Every cyclotomic numerical semigroup is symmetric.
	\end{corollary}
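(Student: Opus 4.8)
The plan is to translate the cyclotomic hypothesis into palindromicity of $\mathrm{P}_S$ and then appeal to Theorem~\ref{symmetrici iff palindromic}, which identifies symmetry of $S$ with palindromicity of $\mathrm{P}_S$. Since $S$ is cyclotomic, I would start from a factorization $\mathrm{P}_S = \prod_{d \in \mathcal{D}} \Phi_d^{f_d}$ with $f_d \geq 1$, and first record that no factor $\Phi_1$ can occur: evaluating \eqref{flauw} at $x=1$ gives $\mathrm{P}_S(1) = 1$, while $\Phi_1(1) = 0$, so every $d \in \mathcal{D}$ satisfies $d > 1$. This step, though immediate, is the one that genuinely needs attention, since $\Phi_1(x) = x - 1$ is precisely the cyclotomic polynomial that fails to be palindromic, and overlooking it would break the argument.

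Next I would invoke \eqref{eq:reci}, which shows that $\Phi_d$ is palindromic for every $d > 1$, and then verify that the palindromic property is multiplicative. Concretely, if $f$ and $g$ are palindromic of degrees $d_1$ and $d_2$, then
\[ x^{d_1+d_2}(fg)(x^{-1}) = \bigl( x^{d_1} f(x^{-1}) \bigr)\bigl( x^{d_2} g(x^{-1}) \bigr) = f(x)g(x), \]
so $fg$ is palindromic of degree $d_1 + d_2$. Iterating this observation across all the factors $\Phi_d^{f_d}$ (each with $d > 1$) shows that $\mathrm{P}_S$ is itself palindromic.

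Finally, applying the ``palindromic $\Rightarrow$ symmetric'' direction of Theorem~\ref{symmetrici iff palindromic} concludes that $S$ is symmetric. I do not anticipate any real obstacle: the whole proof is a short assembly of facts already in hand, and the only subtlety is the bookkeeping that excludes $\Phi_1$ from the factorization.
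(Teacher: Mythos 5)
Your proof is correct and follows the paper's argument exactly: the paper likewise notes that $\mathrm{P}_S(1)=1$ excludes $\Phi_1$ from the factorization, that $\Phi_n$ is palindromic for $n>1$, that products of palindromic polynomials are palindromic, and then applies Theorem~\ref{symmetrici iff palindromic}. Nothing to add.
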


	\begin{corollary}[{\cite[Lemma 7]{ciolan2016cyclotomic}}] 
		\label{lessthanfour}
		Conjecture~\ref{conjecture} holds true for those $S$ with $\mathrm{e}(S)\le 3$.
	\end{corollary}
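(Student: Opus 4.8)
The plan is to observe that one half of Conjecture~\ref{conjecture}, namely that complete intersection implies cyclotomic, is already available for \emph{all} numerical semigroups through Corollary~\ref{complete intersection is cyclotomic}. Hence it suffices to establish the converse under the standing hypothesis $\mathrm{e}(S)\le 3$: if $S$ is cyclotomic with $\mathrm{e}(S)\le 3$, then $S$ is complete intersection.

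First I would use Corollary~\ref{cor:symmetric} to pass from the cyclotomic hypothesis to the weaker but more tractable property of symmetry, so that every cyclotomic $S$ is symmetric. The problem then reduces to showing that a symmetric numerical semigroup with $\mathrm{e}(S)\le 3$ is complete intersection, which I would handle by a short case analysis on $\mathrm{e}(S)$.

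For $\mathrm{e}(S)=1$ we have $S=\mathbb{N}$, which is complete intersection by definition (Theorem~\ref{gluing ci}). For $\mathrm{e}(S)=2$, writing $S=\langle a,b\rangle$ with $\gcd(a,b)=1$ and $a,b>1$, the semigroup is the gluing $a\mathbb{N} +_{ab} b\mathbb{N}$ of two copies of $\mathbb{N}$, hence complete intersection by Theorem~\ref{gluing ci}; here no symmetry hypothesis is even needed. The only case where the symmetry of $S$ is genuinely used is $\mathrm{e}(S)=3$: here Herzog's Theorem~\ref{thm:e3} asserts that complete intersection and symmetric are equivalent, so the symmetry delivered by Corollary~\ref{cor:symmetric} immediately yields that $S$ is complete intersection.

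There is no real obstacle in this argument, since the heavy lifting is done by the external input of Theorem~\ref{thm:e3}, which is precisely the equivalence of symmetric and complete intersection in embedding dimension three. It is worth emphasising that this equivalence is special to $\mathrm{e}(S)\le 3$: as recalled in the introduction, the semigroups $S_k=\langle k,k+1,\dots,2k-2\rangle$ with $k\ge 5$ have embedding dimension at least four and are symmetric but not even cyclotomic. Thus the simple route \emph{cyclotomic} $\Rightarrow$ \emph{symmetric} $\Rightarrow$ \emph{complete intersection} cannot be pushed beyond embedding dimension three, and this is exactly where the genuine difficulty of Conjecture~\ref{conjecture} is concentrated.
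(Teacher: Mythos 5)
Your proposal is correct and follows essentially the same route as the paper, whose one-line proof cites exactly Corollary~\ref{cor:symmetric} and Theorem~\ref{thm:e3}; you have merely spelled out the easy cases $\mathrm{e}(S)\le 2$ and the converse direction via Corollary~\ref{complete intersection is cyclotomic}, which the paper leaves implicit.
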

	\begin{proof}
		This follows from Corollary~\ref{cor:symmetric} and Theorem~\ref{thm:e3}.
	\end{proof}
	To conclude this section, we note that the cyclotomicity of numerical semigroups is preserved under gluing.
	\begin{corollary}\label{gluing cyclotomic}
		If $S$ is the gluing of $S_1$ and $S_2$ at $a_1a_2$, then $S$ is cyclotomic if and only if both $S_1$ and $S_2$ are cyclotomic.
	\end{corollary}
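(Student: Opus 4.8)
The plan is to reduce everything to the polynomial factorisation
$$\mathrm{P}_S(x) = \mathrm{P}_{\langle a_1,a_2\rangle}(x)\,\mathrm{P}_{S_1}(x^{a_1})\,\mathrm{P}_{S_2}(x^{a_2})$$
supplied by Proposition~\ref{gluing iff Hilbert}, and then to read off cyclotomicity from the location of the roots. The crucial observation I would record first is that, for any numerical semigroup $T$, the polynomial $\mathrm{P}_T$ is monic with $\mathrm{P}_T(0)=1$, so that $T$ is cyclotomic if and only if every complex root of $\mathrm{P}_T$ is a root of unity. Indeed, factoring a monic integer polynomial into monic irreducibles over $\mathbb{Q}$ (which lie in $\mathbb{Z}[x]$ by Gauss's lemma), an irreducible factor all of whose roots are roots of unity must be the minimal polynomial of such a root, i.e. some $\Phi_d$; the reverse implication is immediate since the roots of $\Phi_d$ are roots of unity. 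I would also note that $\langle a_1,a_2\rangle$ is always cyclotomic by \eqref{e=2factorincyclotomics}, so all roots of $\mathrm{P}_{\langle a_1,a_2\rangle}$ are roots of unity.

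For the ``if'' direction, suppose $S_1$ and $S_2$ are cyclotomic. Then every root of $\mathrm{P}_{S_1}$ and of $\mathrm{P}_{S_2}$ is a root of unity. Since the roots of $\mathrm{P}_{S_i}(x^{a_i})$ are exactly the $a_i$-th roots of the roots of $\mathrm{P}_{S_i}$, and an $a_i$-th root of a root of unity is again a root of unity, all roots of $\mathrm{P}_{S_1}(x^{a_1})$ and of $\mathrm{P}_{S_2}(x^{a_2})$ are roots of unity. Combined with the same statement for $\mathrm{P}_{\langle a_1,a_2\rangle}$, the displayed factorisation shows that every root of $\mathrm{P}_S$ is a root of unity, whence $S$ is cyclotomic.

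The more delicate ``only if'' direction is where I expect the one genuine idea to be needed. Assuming $S$ cyclotomic, $\mathrm{P}_S$ is a product of cyclotomic polynomials; dividing out the product of cyclotomic polynomials $\mathrm{P}_{\langle a_1,a_2\rangle}$ inside the UFD $\mathbb{Q}[x]$ shows that the quotient $\mathrm{P}_{S_1}(x^{a_1})\mathrm{P}_{S_2}(x^{a_2})$ is itself a product of cyclotomic polynomials, so all of its roots are roots of unity. The point to get right is the descent from $\mathrm{P}_{S_i}(x^{a_i})$ back to $\mathrm{P}_{S_i}$: a root $\beta$ of $\mathrm{P}_{S_i}$ is the $a_i$-th power of some root $\alpha$ of $\mathrm{P}_{S_i}(x^{a_i})$, and since $\alpha$ is a root of unity so is $\beta=\alpha^{a_i}$. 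Hence every root of $\mathrm{P}_{S_i}$ is a root of unity, and by the characterisation above $\mathrm{P}_{S_i}$ is a product of cyclotomic polynomials, i.e. $S_i$ is cyclotomic for $i=1,2$. The main obstacle is thus not any hard estimate but getting the root-of-unity characterisation cleanly in place, since it converts both directions into elementary bookkeeping about the substitution $x\mapsto x^{a_i}$.
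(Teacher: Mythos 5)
Your proof is correct and follows the same route as the paper, which simply cites the factorisation $\mathrm{P}_S(x) = \mathrm{P}_{\langle a_1,a_2\rangle}(x)\,\mathrm{P}_{S_1}(x^{a_1})\,\mathrm{P}_{S_2}(x^{a_2})$ from Proposition~\ref{gluing iff Hilbert} and leaves the rest implicit. Your root-of-unity characterisation of cyclotomicity is exactly the bookkeeping needed to make both directions precise, so this is the same argument with the details filled in.
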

	\begin{proof}
		This follows from Proposition~\ref{gluing iff Hilbert}.
	\end{proof}
	
	\section{Cyclotomic numerical semigroups of polynomial length 1} \label{sec:length:1}
	
	In this section we classify cyclotomic numerical semigroups having irreducible semigroup polynomial. They
	are given in Corollary~\ref{onefactorclassification}.
	
	\begin{lemma} \label{lem:twoprimes}
		Let $S$ be a numerical semigroup such that $\Phi_n$ divides $\mathrm{P}_S$ for some $n$.  If $p, q \in S$ are two different primes dividing $n$, then $S = \langle p,q\rangle$ and $\mathrm{P}_S = \Phi_{pq}$. 
	\end{lemma}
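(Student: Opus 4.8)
The plan is to run a degree squeeze between the upper bound on the Frobenius number coming from $p,q\in S$ and the lower bound on $\deg\mathrm{P}_S$ forced by the divisibility $\Phi_n\mid\mathrm{P}_S$, and then to pin down $n$ exactly using the low-order coefficients of $\mathrm{P}_S$.

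First I would record that $\deg\mathrm{P}_S=\mathrm{F}(S)+1$, and since $p,q\in S$ are coprime, Lemma~\ref{Frobenius upperbound} gives $\deg\mathrm{P}_S\le(p-1)(q-1)=\varphi(pq)$. On the other hand, $p$ and $q$ are distinct primes dividing $n$, so $pq\mid n$; using $\varphi(pq)\mid\varphi(n)$ (or the explicit product formula for $\varphi$) I get $\varphi(pq)\le\varphi(n)=\deg\Phi_n$. Since $\Phi_n\mid\mathrm{P}_S$ we also have $\deg\Phi_n\le\deg\mathrm{P}_S$. Chaining these yields $\deg\mathrm{P}_S\le\varphi(pq)\le\varphi(n)=\deg\Phi_n\le\deg\mathrm{P}_S$, so every inequality is an equality. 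In particular $\deg\Phi_n=\deg\mathrm{P}_S$; as both polynomials are monic and $\Phi_n\mid\mathrm{P}_S$, this forces $\mathrm{P}_S=\Phi_n$. Moreover $\varphi(n)=\varphi(pq)$ with $pq\mid n$, and writing out the product formula for $\varphi$ shows that the only possibilities are $n=pq$ or, when $p,q$ are both odd, $n=2pq$.

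The delicate point is to exclude $n=2pq$, which I would do by comparing linear coefficients. Since $\deg\Phi_n=\varphi(pq)\ge 2$, the semigroup $S$ is not $\mathbb{N}$, so $1\notin S$, i.e. $1$ is a gap of $S$. From $\mathrm{P}_S(x)=1+(x-1)\sum_{g\notin S}x^g$ one reads off directly that the coefficient of $x$ in $\mathrm{P}_S$ equals $-1$. On the cyclotomic side, \eqref{eq:first-cyclo-coeff} gives $\Phi_n(x)\equiv 1-\mu(n)x\pmod{x^2}$, so $\mathrm{P}_S=\Phi_n$ forces $\mu(n)=1$. As $\mu(2pq)=-1$, this rules out $n=2pq$ and leaves $n=pq$, whence $\mathrm{P}_S=\Phi_{pq}$.

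Finally, to upgrade the polynomial identity to an identity of semigroups, I would use that the semigroup polynomial determines the semigroup: from $\mathrm{H}_S=\mathrm{P}_S/(1-x)$ the coefficients of the Hilbert series are exactly the indicator function of $S$. Since $\Phi_{pq}=\mathrm{P}_{\langle p,q\rangle}$ by \eqref{folklore result}, the equality $\mathrm{P}_S=\Phi_{pq}=\mathrm{P}_{\langle p,q\rangle}$ yields $\mathrm{H}_S=\mathrm{H}_{\langle p,q\rangle}$ and hence $S=\langle p,q\rangle$. I expect the degree squeeze to be the conceptual heart of the argument, with the separation of $pq$ from $2pq$ via the coefficient check $\mu(n)=1$ being the one genuinely delicate step.
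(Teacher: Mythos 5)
Your proof is correct and follows essentially the same route as the paper's: the degree squeeze $(p-1)(q-1)\le\varphi(n)\le \mathrm{F}(S)+1\le(p-1)(q-1)$ forcing $\mathrm{P}_S=\Phi_n$ and $\varphi(n)=\varphi(pq)$, the reduction to $n\in\{pq,2pq\}$, the elimination of $2pq$ via the linear coefficient and $\mu(n)=1$, and the final identification $S=\langle p,q\rangle$ from $\Phi_{pq}=\mathrm{P}_{\langle p,q\rangle}$. You are in fact slightly more explicit than the paper on two minor points (why $\mathrm{P}_S\equiv 1-x\pmod{x^2}$, and why degree equality plus monicity upgrades divisibility to equality).
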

	\begin{proof}
		Recall that $\deg \Phi_n = \varphi(n)$, see \eqref{cyclotomic polynomial}, and $\deg \mathrm{P}_S =  \mathrm{F}(S)+1$. Hence, 
		\[(p-1)(q-1) \le \varphi(n) \le \deg \mathrm{P}_S =  \mathrm{F}(S)+1.\]
		By Lemma~\ref{Frobenius upperbound}, with $a = p$ and $b = q$, it follows that
		\[(p-1)(q-1) \le \varphi(n) \le \mathrm{F}(S)+1 \leq (p-1) (q-1),\]
		and we conclude that $\mathrm{P}_S = \Phi_n$ and $\varphi(n) = \varphi(p q)$. 
		Writing $n=kpq$ we have 	$\varphi(pq)=\varphi(n)=\varphi(kpq)\ge \varphi(k)\varphi(pq)$ and hence $\varphi(k)=1,$ implying $k=1$ or $k=2$.
		Consequently, $n = pq$ or $n = 2pq$. From \eqref{eq:first-cyclo-coeff} and the equality 
		$\mathrm{P}_S(x) \equiv \Phi_{n}(x)\,\,({\rm mod ~}x^2),$ we obtain $\mu(n) = 1$. Therefore $n = pq$ and $\mathrm{P}_S = \Phi_{p q}$. By \eqref{folklore result} we conclude that $S =\langle p, q\rangle $.
	\end{proof}
	
	\begin{theorem} \label{thm:l1}
		Let $S$ be a numerical semigroup. Let $j,h$ and
		$n$ be positive integers such that $\mathrm{P}_S(x)=\Phi_n(x^j)^h$. 
		Then $S=\langle p,q\rangle$ with $p \neq q$ primes, $n=pq$ and $j=h=1.$
	\end{theorem}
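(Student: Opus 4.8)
The plan is to pin down $j$, $h$, and $\mu(n)$ by comparing the lowest-order coefficients of both sides of $\mathrm{P}_S(x)=\Phi_n(x^j)^h$, and then to locate two prime generators of $S$ among the prime factors of $n$ so that Lemma~\ref{lem:twoprimes} applies.

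First I would record the elementary normalisations of $\mathrm{P}_S$. Since $\deg\Phi_n(x^j)^h = jh\varphi(n)\ge 1$, the polynomial $\mathrm{P}_S$ is non-constant, so $S\neq\mathbb{N}$ and $1$ is a gap; as $0\in S$ this forces $\mathrm{P}_S(x)\equiv 1-x\pmod{x^2}$. Also $\mathrm{P}_S(1)=1$. The value $n=1$ is excluded at once, because $\Phi_1(x^j)^h=(x^j-1)^h$ vanishes at $x=1$ whereas $\mathrm{P}_S(1)=1$; hence $n\ge 2$ and $\Phi_n(0)=1$. Now I compare modulo $x^2$. If $j\ge 2$, then $\Phi_n(x^j)\equiv 1\pmod{x^2}$ by \eqref{eq:first-cyclo-coeff}, so $\Phi_n(x^j)^h\equiv 1$, contradicting the coefficient $-1$ of $x$ in $\mathrm{P}_S$; therefore $j=1$. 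With $j=1$, \eqref{eq:first-cyclo-coeff} gives $\Phi_n(x)^h\equiv(1-\mu(n)x)^h\equiv 1-h\mu(n)x\pmod{x^2}$, and matching with $1-x$ forces $h\mu(n)=1$, i.e.\ $h=1$ and $\mu(n)=1$. At this point $\mathrm{P}_S=\Phi_n$ with $n$ squarefree and a product of an even (hence at least two) number of distinct primes.

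Next I would produce two distinct primes of $S$ dividing $n$. Let $p<q$ be the two smallest prime factors of $n$. The only divisors of $n$ that are $\le q$ are $1,p,q$, and multiplicativity of $\mu$ with $\mu(n)=1$ gives $\mu(n/p)=\mu(n/q)=-1$, so \eqref{Mobius inversion formula2} reduces modulo $x^{q+1}$ to
\[
\Phi_n(x)\equiv (1-x)(1-x^p)^{-1}(1-x^q)^{-1}\pmod{x^{q+1}}.
\]
Dividing by $1-x$ yields $\mathrm{H}_S(x)\equiv (1-x^p)^{-1}(1-x^q)^{-1}=\sum_{a,b\ge 0}x^{ap+bq}\pmod{x^{q+1}}$. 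Since the coefficients of $\mathrm{H}_S$ are the indicator function of $S$, and in this range $x^p$ (from $a=1,b=0$) and $x^q$ (from $a=0,b=1$) each occur with coefficient $1$, I conclude $p\in S$ and $q\in S$.

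Finally, $p$ and $q$ are two distinct primes dividing $n$ that lie in $S$, so Lemma~\ref{lem:twoprimes} (with $\Phi_n\mid\mathrm{P}_S$) gives $S=\langle p,q\rangle$ and $\mathrm{P}_S=\Phi_{pq}$; comparing with $\mathrm{P}_S=\Phi_n$ and using that the map $n\mapsto\Phi_n$ is injective forces $n=pq$, which together with $j=h=1$ completes the proof. I expect the one genuinely substantive step to be the middle one: extracting two prime elements of $S$ from the Taylor expansion of $\Phi_n$. The mod $x^2$ comparison is routine bookkeeping, but the observation that the initial segment of $\Phi_n$ is governed only by the divisors $\le q$, so that the bottom coefficients of $\mathrm{H}_S$ agree with those of $\langle p,q\rangle$, is exactly what makes Lemma~\ref{lem:twoprimes} applicable and thereby collapses $n$ to a product of two primes.
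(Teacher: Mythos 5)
Your proof is correct and follows essentially the same route as the paper: a mod $x^2$ coefficient comparison to force $j=h=\mu(n)=1$, then a truncation of the M\"obius product for $\Phi_n$ modulo $x^{q+1}$ to show the two smallest prime factors of $n$ lie in $S$, and finally an appeal to Lemma~\ref{lem:twoprimes}. The only cosmetic difference is that you read off $p,q\in S$ from the expansion of $(1-x^p)^{-1}(1-x^q)^{-1}$, whereas the paper rearranges the same congruence as $\mathrm{H}_S\equiv 1+x^{p}\mathrm{H}_S+x^{q}\mathrm{H}_S$.
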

	\begin{proof} 
		On the one hand we have $\mathrm{P}_S(x) \equiv 1 - x\,({\rm mod~}x^2)$, and on the other $\Phi_n(x^j)^h\equiv 1 - \mu(n)hx^j\,({\rm mod~}x^{2j}).$ We conclude that $\mu(n)=j=h=1$, so $\mathrm{P}_S(x) = \Phi_n(x)$. From $1=\mathrm{P}_S(1)=\Phi_n(1)$ we infer $n\ge 2$.
		Thus $n$ is a product of an even number of distinct primes $p_1 < p_2 < \dots < p_{2k},$ that is $n=p_1\cdots p_{2k}$. From \eqref{Mobius inversion formula2} and the fact that if $d|n$, then 
		$\mu(n/d)=\mu(d)$, we obtain
		\begin{equation} \label{eq:mobius}
		\Phi_n(x) \prod_{\substack{d \mid n\\ \mu(d) = -1}} (1-x^d) = 
		\prod_{\substack{d \mid n\\ \mu(d) = 1}} (1-x^d).
		\end{equation}
		Recall that $\Phi_n(x) = \mathrm{P}_S(x)=(1-x) \mathrm{H}_S(x)$. On dividing both sides of \eqref{eq:mobius} by $1-x$ and reducing the resulting identity modulo $x^{p_2+1}$, we find that
		\begin{equation*}
		(1 - x^{p_1}) (1 - x^{p_2}) \mathrm{H}_S(x) \equiv 1\,\,({\rm mod~}x^{p_2+1}),
		\end{equation*}
		which can be rewritten as
		\begin{equation*}
		\mathrm{H}_S(x) \equiv 1 + x^{p_1}\mathrm{H}_S(x) + x^{p_2}\mathrm{H}_S(x)\,\,({\rm mod~}x^{p_2+1}).
		\end{equation*}
		We  deduce that both $p_1$ and $p_2$ are in  $S$ and so $S = \langle p_1, p_2\rangle$ by Lemma~\ref{lem:twoprimes}.
	\end{proof}
	\begin{corollary}[Part 1 of Theorem~\ref{thm:main}]
		\label{onefactorclassification}
		A cyclotomic numerical semigroup $S$ has irreducible semigroup polynomial if and only if  for some distinct primes $p$ and $q$ we have $S=\langle p,q\rangle$ (and so $\mathrm{P}_S=\Phi_{pq}$).
	\end{corollary}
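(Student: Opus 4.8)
The plan is to obtain this corollary as an immediate consequence of Theorem~\ref{thm:l1}, treating the two directions of the equivalence separately. The point is that once $S$ is assumed cyclotomic, irreducibility of $\mathrm{P}_S$ forces it to be a single cyclotomic polynomial, which is precisely the special case $j=h=1$ of that theorem.

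For the backward direction I would start from $S=\langle p,q\rangle$ with $p\neq q$ primes. The folklore identity \eqref{folklore result} gives $\mathrm{P}_S=\Phi_{pq}$. Since the cyclotomic polynomials are irreducible over the rationals (recalled in Section~\ref{subsec:cyclotomic}), $\mathrm{P}_S$ is irreducible, and being a single $\Phi_{pq}$ it is trivially a product of cyclotomic polynomials, so $S$ is a cyclotomic numerical semigroup with irreducible semigroup polynomial. For the forward direction, suppose $S$ is cyclotomic with $\mathrm{P}_S$ irreducible. By the definition of cyclotomicity we may write $\mathrm{P}_S=\prod_{d\in\mathcal{D}}\Phi_d^{f_d}$ with each $f_d\ge 1$. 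Because every $\Phi_d$ is itself irreducible over $\mathbb{Q}$, an irreducible such product must consist of exactly one factor, so $\mathrm{P}_S=\Phi_n$ for some $n$. Applying Theorem~\ref{thm:l1} with $j=h=1$ then yields at once that $S=\langle p,q\rangle$ with $p\neq q$ primes and $n=pq$, completing the equivalence.

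Since Theorem~\ref{thm:l1} carries all the real content, I do not expect any genuine obstacle here. The only step that deserves an explicit (if routine) justification is the reduction from an irreducible product of cyclotomic polynomials to a single cyclotomic polynomial; this rests solely on the irreducibility of each $\Phi_d$, which is already invoked in Section~\ref{subsec:cyclotomic}.
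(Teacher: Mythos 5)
Your proof is correct and matches the paper's (implicit) argument: the paper states Corollary~\ref{onefactorclassification} as an immediate consequence of Theorem~\ref{thm:l1}, and your reduction of an irreducible product of cyclotomic polynomials to a single $\Phi_n$, followed by the $j=h=1$ case of that theorem and the folklore identity \eqref{folklore result} for the converse, is exactly the intended route.
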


	\section{Cyclotomic numerical semigroups of polynomial length 2} \label{sec:length:2}

	In this section we classify the cyclotomic numerical semigroups with polynomial length $2$, as it was announced in part 2 of Theorem~\ref{thm:main}. As a consequence of this result, it follows that every cyclotomic numerical semigroup with polynomial length $2$ is complete intersection. Our proof of part 2 of Theorem~\ref{thm:main} uses the following three lemmas. 
	
	\begin{lemma}\label{cs-deriv-0}
		Let $S$ be a cyclotomic numerical semigroup. Hence $$\mathrm{P}_S(x) = \prod_{d \in \mathcal{D}} \Phi_d(x)^{f_d}$$ for some finite set of positive integers $\mathcal{D}$ and positive integers $f_d$. Then we have
		\begin{equation*}
		\sum_{d\in {\mathcal D}}f_d\,\mu(d) = 1. 
		\end{equation*}
		In particular, there exists an integer $d>1$ such that $\mu(d) = 1$ and  $\Phi_d \mid \mathrm{P}_S$.
	\end{lemma}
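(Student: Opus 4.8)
The plan is to compute the reduction of $\mathrm{P}_S(x)$ modulo $x^2$ in two different ways and then compare the linear coefficients. We may assume $S \neq \mathbb{N}$, since otherwise $\mathrm{P}_S = 1$, the product is empty and the asserted identity fails; the lemma is implicitly about the nontrivial case. First I would record the left-hand side. As $S \neq \mathbb{N}$ its multiplicity is at least $2$, so $\mathrm{H}_S(x) = 1 + x^m + \cdots$ with $m \geq 2$, whence $\mathrm{H}_S(x) \equiv 1 \pmod{x^2}$ and therefore, by \eqref{flauw},
\[
\mathrm{P}_S(x) = (1-x)\mathrm{H}_S(x) \equiv 1 - x \pmod{x^2}.
\]

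Next I would expand the right-hand side. Since $\mathrm{P}_S(1) = 1$ while $\Phi_1(1) = 0$, the factor $\Phi_1$ cannot occur, so every $d \in \mathcal{D}$ satisfies $d > 1$; this is exactly what makes \eqref{eq:first-cyclo-coeff} applicable to each factor, giving $\Phi_d(x) \equiv 1 - \mu(d)\,x \pmod{x^2}$. Multiplying these congruences together yields
\[
\prod_{d \in \mathcal{D}} \Phi_d(x)^{f_d} \equiv \prod_{d \in \mathcal{D}} \bigl(1 - \mu(d)x\bigr)^{f_d} \equiv 1 - \Bigl( \sum_{d \in \mathcal{D}} f_d\,\mu(d) \Bigr) x \pmod{x^2}.
\]
Comparing the coefficient of $x$ with the expression from the first step gives $\sum_{d \in \mathcal{D}} f_d\,\mu(d) = 1$, which is the main assertion.

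For the final claim I would argue by positivity. If every $d \in \mathcal{D}$ satisfied $\mu(d) \leq 0$, then, since all $f_d \geq 1$, the sum $\sum_{d \in \mathcal{D}} f_d\,\mu(d)$ would be $\leq 0$, contradicting that it equals $1$. Hence some $d \in \mathcal{D}$ has $\mu(d) > 0$, i.e. $\mu(d) = 1$, because $\mu$ takes values in $\{-1,0,1\}$. As already observed, any $d \in \mathcal{D}$ satisfies $d > 1$, and by construction $\Phi_d \mid \mathrm{P}_S$, so this $d$ witnesses the existence statement.

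There is no serious obstacle here: the argument is routine once \eqref{eq:first-cyclo-coeff} is in hand. The only points that require a little care are the two preliminary reductions, namely checking that $\Phi_1$ is absent (so that \eqref{eq:first-cyclo-coeff} applies to every factor) and that the multiplicity bound yields $\mathrm{P}_S \equiv 1 - x \pmod{x^2}$, together with keeping track of the signs when comparing linear coefficients.
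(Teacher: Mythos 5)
Your proof is correct and follows essentially the same route as the paper: exclude $\Phi_1$ via $\mathrm{P}_S(1)=1$, apply \eqref{eq:first-cyclo-coeff} to each factor, reduce modulo $x^2$, and compare the linear coefficient with that of $\mathrm{P}_S(x)\equiv 1-x$. The extra care you take with the degenerate case $S=\mathbb{N}$ and with justifying $\mathrm{P}_S\equiv 1-x\pmod{x^2}$ is fine but not a different argument.
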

	\begin{proof}
		Since $\mathrm{P}_S(1) = 1$, we have $1 \not \in \mathcal{D}$. 
		In view of \eqref{eq:first-cyclo-coeff}, we obtain
		\[ \mathrm{P}_S(x) \equiv 1-x\sum_{d\in \mathcal D}f_d\,\mu(d) \,\,({\rm mod ~}x^2).\]
		Recalling that $\mathrm{P}_S(x) \equiv 1 - x\,({\rm mod~}x^2)$, it follows that
		$1=\sum_{d\in {\mathcal D}}f_d\,\mu(d)$, 
		and also that there must be some integer $d>1$ in $\mathcal{D}$ with $\mu(d) = 1$.
	\end{proof}

	\begin{lemma} \label{lem:length-2:q}
		Let $S$ be a numerical semigroup such that 
		\begin{equation}
		\label{startingpoint}
		\mathrm{P}_S(x) = \Phi_n(x) f(x^q) 
		\end{equation}
		for some integers $n,q>1$ such that $\mu(n) = 1$ and $f(x)\in \mathbb Z[x]$ is of positive degree. Then $q$ is a prime number and $n = p q$ for some other prime $p \in S$.
	\end{lemma}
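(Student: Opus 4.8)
The plan is to read off enough low-degree information from the factorisation to locate the small elements of $S$, and then to use Lemma~\ref{lem:twoprimes} to forbid two prime factors of $n$ from lying in $S$ simultaneously. Since $q>1$ we have $f(x^q)\equiv f(0)\pmod{x^2}$, so comparing $\mathrm{P}_S(x)\equiv 1-x\pmod{x^2}$ with $\Phi_n(x)\equiv 1-\mu(n)x=1-x\pmod{x^2}$ (using $\mu(n)=1$ and \eqref{eq:first-cyclo-coeff}) forces $f(0)=1$. Thus $f(x^q)$ is a unit in $\mathbb{Z}[\![x]\!]$ and, setting $P(x):=\Phi_n(x)/(1-x)=\prod_{d\mid n,\,d>1}(1-x^d)^{\mu(d)}$ by \eqref{Mobius inversion formula2} — where, exactly as in the proof of Theorem~\ref{thm:l1}, I use that $\mu(n)=1$ makes $n=p_1\cdots p_{2k}$ squarefree with evenly many prime factors $p_1<\dots<p_{2k}$, so that $\mu(n/d)=\mu(d)$ — I obtain
\[ \mathrm{H}_S(x)=P(x)\,f(x^q). \]
Two facts will be used repeatedly: the coefficients of $\mathrm{H}_S$ lie in $\{0,1\}$, and for $0<k<p_1p_2$ the coefficient $[x^k]P$ counts the representations $k=\sum_i a_ip_i$ with $a_i\ge 0$; in particular $[x^j]P=0$ for $0<j<p_1$ and $[x^{p_1}]P=1$.

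I would first isolate two clean consequences. As $f(x^q)\equiv 1\pmod{x^q}$, the coefficients of $\mathrm{H}_S$ below degree $q$ agree with those of $P$. Hence if $p_2<q$ then $p_1$ and $p_2$ each have a unique representation, giving $p_1,p_2\in S$; since these are distinct primes dividing $n$, Lemma~\ref{lem:twoprimes} yields $\mathrm{P}_S=\Phi_{p_1p_2}$, which is impossible because $\deg\mathrm{P}_S=\varphi(n)+q\deg f>\varphi(p_1p_2)$. Therefore $q\le p_2$, and the very same argument shows that \emph{at most one} prime factor of $n$ can belong to $S$. Next, writing $f(y)=\sum_j f_jy^j$ (so $f_0=1$) and expanding $[x^{p_1}]\mathrm{H}_S=\sum_{j\ge 0}f_j\,[x^{p_1-jq}]P$, I note that $[x^{p_1-jq}]P=0$ whenever $0<p_1-jq<p_1$, while $p_1-jq=0$ forces $j=1$ and $q=p_1$ (as $p_1$ is prime); hence $[x^{p_1}]\mathrm{H}_S=1$ unless $q=p_1$ and $f_1=-1$. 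So $p_1\in S$ in all cases except this single exceptional one, in which $q=p_1$ is already a prime dividing $n$.

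To finish, in the main case $p_1\in S$ I pass to the Apéry set. By \eqref{eq:apery-hilbert} the generating polynomial
\[ \sum_{w\in\Ap(S,p_1)}x^w=(1-x^{p_1})\mathrm{H}_S(x)=\Phi_n(x)\,\frac{1-x^{p_1}}{1-x}\,f(x^q) \]
is $\{0,1\}$-valued with exactly $p_1$ terms, and for small exponents its coefficients count representations using only $p_2,\dots,p_{2k}$. I would combine the rigidity of this polynomial with the Frobenius bound $\varphi(n)+q\deg f=\mathrm{F}(S)+1\le(p_1-1)(s-1)$ of Lemma~\ref{Frobenius upperbound}, applied to $p_1$ and the least element $s\in S$ coprime to $p_1$: controlling $s$ from above bounds $\varphi(n)$, which should force $n=p_1p_2$, make $q=p_2$ (so $q$ is prime and $q\mid n$), and give $n/q=p_1\in S$; the resulting value $f_1=-1$ then matches $[x^{p_2}]\mathrm{H}_S=1+f_1=0$, consistent with the second prime of $n$ staying out of $S$. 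The exceptional case $q=p_1$ is treated symmetrically, with the two primes of $n=p_1p_2$ exchanging roles so that $n/q=p_2\in S$.

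The step I expect to be the main obstacle is precisely this last one. The $\{0,1\}$ bookkeeping becomes delicate exactly when the lowest nonconstant term of $f(x^q)$ collides with a term of $P$ — the cancellation $f_1=-1$ at $q=p_1$, where the multiplicity of $S$ jumps away from $p_1$ — and excluding the possibility that $n$ has more than two prime factors cannot be read off from a single coefficient: it requires propagating the representation-count/$\{0,1\}$ constraint, reinforced by the Frobenius bound, through higher degrees.
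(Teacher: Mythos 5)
Your opening moves are sound and close in spirit to the paper's: you reduce modulo small powers of $x$, use that the coefficients of $\mathrm{H}_S$ lie in $\{0,1\}$, and invoke Lemma~\ref{lem:twoprimes} to forbid two prime divisors of $n$ from lying in $S$ simultaneously. But the proof stops exactly where the lemma's actual content begins, and you say so yourself. Concretely, two things are never established. First, you only prove $q \le p_2$, not that $q \in \{p_1,p_2\}$ — which is what makes $q$ prime and a divisor of $n$. Your coefficient extraction works cleanly at $x^{p_1}$ because $[x^{p_1-jq}]P$ vanishes for $0<p_1-jq<p_1$, but the analogous computation at $x^{p_2}$ involves terms $f_j\,[x^{p_2-jq}]P$ where $p_2-jq$ can be a positive multiple of $p_1$; since the $f_j$ may be negative, you cannot conclude $[x^{p_2}]\mathrm{H}_S\ge 1$ this way. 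The paper sidesteps this by keeping the congruence in the recursive form $\mathrm{H}_S \equiv f(x^q) + x^{p_1}\mathrm{H}_S + x^{p_2}\mathrm{H}_S \pmod{x^{p_2+1}}$, so the problematic contributions are absorbed into coefficients of $\mathrm{H}_S$, which are nonnegative; then if $q\notin\{p_1,p_2\}$ neither $x^{p_1}$ nor $x^{p_2}$ occurs in $f(x^q)$ and both $p_1,p_2$ land in $S$, contradicting Lemma~\ref{lem:twoprimes}.

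Second, the claim $n=pq$ (i.e.\ $n$ has exactly two prime factors) is only gestured at via a Frobenius-number bound applied to $p_1$ and an uncontrolled element $s$; no mechanism for "controlling $s$ from above" is given, and it is not clear one exists along that route. The paper's argument here is short and decisive: assuming a third prime $p_3\mid n$, reduce the identity modulo $x^{p_3+1}$ to get $\mathrm{H}_S \equiv x^{p}\mathrm{H}_S + x^{p_3}\mathrm{H}_S + g(x^q) \pmod{x^{p_3+1}}$ with $g(x^q)=f(x^q)(1-x^{pq})/(1-x^q)$; since $q\nmid p_3$ this forces $p_3\in S$, and then $p,p_3\in S$ with $pp_3\mid n$ makes $\mathrm{P}_S=\Phi_{pp_3}$ irreducible by Lemma~\ref{lem:twoprimes}, contradicting $\deg f>0$. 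You should rework your argument around the recursive congruences rather than the explicit expansion $\mathrm{H}_S=P(x)f(x^q)$; as written, the proposal proves only preliminary facts and leaves the lemma's conclusions unproven.
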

	\begin{proof}
		Since $\mu(n)=1$ by assumption, we can write 
		$n=p_1\cdots p_{2k}$ with $p_1 < p_2 < \dots < p_{2k}$ primes. Using \eqref{Mobius inversion formula2} and reducing the resulting expression modulo $x^{p_2 + 1}$ we obtain from \eqref{startingpoint}
		$$
		\mathrm{H}_S(x) (1 - x^{p_1})(1 - x^{p_2}) \equiv f(x^q)\,\,({\rm mod~}x^{p_2+1}),
		$$
		which can be rewritten as
		\begin{equation} \label{eq:l2:1}
		\mathrm{H}_S(x) \equiv f(x^q) + x^{p_1}\mathrm{H}_S(x) + x^{p_2}\mathrm{H}_S(x)\,\,({\rm mod~}x^{p_2+1}).
		\end{equation}
		Since $p_1$ and $p_2$ can not belong to $S$ at the same time by Lemma~\ref{lem:twoprimes}, we see that $f(x^q)$ contains a monomial with exponent $p_1$ or $p_2$. Consequently, $q$ divides $p_1$ or $p_2$, that is, $q \in \{p_1, p_2\}$. Furthermore, if $p \in \{p_1, p_2\} \setminus \{q\}$, then $q$ does not divide $p$ and we find that $p \in S$ by \eqref{eq:l2:1}. Note that 
		$\{p_1, p_2\} = \{p, q\}.$ 
		
		It remains to show that $n=p_1 p_2=pq$. In order to obtain a contradiction we assume that $k > 1$.
		Using \eqref{Mobius inversion formula2} and reducing the resulting expression modulo $x^{p_3 + 1},$ we obtain from \eqref{startingpoint}
		\begin{equation*}
		\mathrm{H}_S(x) (1 - x^{p})(1 - x^q)(1 - x^{p_3}) \equiv f(x^q) (1 - x^{pq})\,\,({\rm mod~}x^{p_3+1}),
		\end{equation*}
		which can be simplified to
		\begin{equation} \label{eq:l2:2}
		\mathrm{H}_S(x) \equiv x^{p}\mathrm{H}_S(x) + x^{p_3} \mathrm{H}_S(x)  + f(x^q) \frac{(1-x^{pq})}{(1-x^q)} \! \pmod {x^{p_3 + 1}}.
		\end{equation}
		Note that 
		\begin{equation*}
		f(x^q) \frac{(1-x^{pq})}{(1-x^q)} = g(x^q),
		\end{equation*}
		for some $g(x)\in \mathbb Z[x].$
		Since $q$ does not divide $p_3$, we must have $p_3 \in S$ in view of \eqref{eq:l2:2}. Since $p \in S$ and $p \cdot p_3$ divides $n$, we conclude by Lemma~\ref{lem:twoprimes} that $S = \langle p,p_3 \rangle$ and hence $\mathrm{P}_S=\Phi_{pp_3}$ is irreducible, whereas by assumption it has at least two irreducible factors.
	\end{proof}
	\begin{lemma} \label{lem:length-2:classification}
		Let $S$ be a numerical semigroup such that 
		\begin{equation*}
		\mathrm{P}_S(x) = \Phi_{pq}(x) \Phi_{l}(x^q)
		\end{equation*}
		for some distinct prime numbers $p$ and $q$, and with $l$ a multiple of $q$. Then either
		\begin{enumerate}
			\item $S=\langle p,q^2\rangle$ and $l = pq$; or
			\item $S=\langle p, q^2,qr\rangle$ for some prime number $r$ such that $r \not \in \{p, q\}$ and $p \in \langle q, r \rangle$, and  $l= qr$.
		\end{enumerate}
	\end{lemma}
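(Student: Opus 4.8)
The plan is to peel off the irreducible factor $\Phi_l$ by restricting $S$ to its elements divisible by $q$, thereby producing an auxiliary numerical semigroup whose semigroup polynomial is exactly $\Phi_l$, and then to invoke the length-one classification in Corollary~\ref{onefactorclassification}.

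\emph{Rewriting the hypothesis.} Since $\Phi_{pq}=\mathrm{P}_{\langle p,q\rangle}=\frac{(1-x)(1-x^{pq})}{(1-x^p)(1-x^q)}$ by \eqref{embedis2}, dividing $\mathrm{P}_S=(1-x)\mathrm{H}_S$ by $1-x$ gives
\[
(1-x^p)\mathrm{H}_S(x)=\frac{1-x^{pq}}{1-x^q}\,\Phi_l(x^q)=\Phi_p(x^q)\,\Phi_l(x^q)=:C(x^q),
\]
using $\frac{1-x^{pq}}{1-x^q}=1+x^q+\dots+x^{(p-1)q}=\Phi_p(x^q)$ (valid as $p$ is prime). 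As $C(x^q)$ is a polynomial in $x^q$ it has no term of degree $p$ (because $q\nmid p$), so comparing the coefficient of $x^p$ on both sides yields $[p\in S]-[0\in S]=0$, i.e. $p\in S$. Hence by \eqref{eq:apery-hilbert}, $\sum_{w\in\Ap(S,p)}x^w=C(x^q)$, so $\Ap(S,p)$ consists of multiples of $q$; I write $\Ap(S,p)=qJ$ and $C(y)=\sum_{j\in J}y^j$, a polynomial with coefficients in $\{0,1\}$.

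\emph{The auxiliary semigroup.} Next I would set $U=\{\,s/q:\ s\in S,\ q\mid s\,\}$. Since $S$ is a numerical semigroup, $S\cap q\mathbb{N}$ has finite complement in $q\mathbb{N}$, so $U$ is a numerical semigroup. Keeping in $\mathrm{H}_S(x)=C(x^q)/(1-x^p)$ only the exponents divisible by $q$ (equivalently $q\mid k$ in the terms $qj+kp$) gives $\mathrm{H}_U(x^q)=C(x^q)/(1-x^{pq})$, whence
\[
\mathrm{H}_U(y)=\frac{C(y)}{1-y^p}=\frac{\Phi_p(y)\Phi_l(y)}{1-y^p}=\frac{\Phi_l(y)}{1-y},\qquad\text{so}\qquad \mathrm{P}_U(y)=\Phi_l(y).
\]
Thus $U$ is a cyclotomic numerical semigroup with irreducible semigroup polynomial $\Phi_l$, and Corollary~\ref{onefactorclassification} forces $U=\langle q',r'\rangle$ with $\Phi_l=\Phi_{q'r'}$, i.e. $l=q'r'$. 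As $q\mid l$, one of these primes is $q$; calling the other $r$, I obtain $l=qr$ with $r\neq q$ prime and $U=\langle q,r\rangle$.

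\emph{Reconstruction and the condition $p\in\langle q,r\rangle$.} This is the delicate point. Because $C(y)=(1-y^p)\mathrm{H}_U(y)$ has non-negative coefficients, its coefficient of $y^p$, which equals $[p\in U]-[0\in U]=[p\in U]-1$, must be $\ge 0$, forcing $p\in U=\langle q,r\rangle$. With $p\in U$, applying \eqref{eq:apery-hilbert} to $U$ gives $C(y)=\sum_{w\in\Ap(U,p)}y^w$, so $J=\Ap(U,p)$; using the Apéry decompositions of $S$ at $p$ and of $U$ at $p$ then yields $S=qJ+p\mathbb{N}=qU+p\mathbb{N}=\langle p,q^2,qr\rangle$. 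Finally I would split on whether $r=p$: if $r=p$ then $l=pq$ and $qr=pq\in\langle p\rangle$ is redundant, giving $S=\langle p,q^2\rangle$ (case 1); if $r\neq p$ then $r\notin\{p,q\}$, $p\in\langle q,r\rangle$, $l=qr$, and $S=\langle p,q^2,qr\rangle$ (case 2). I expect the main obstacle to be recognising $U$ as a numerical semigroup with $\mathrm{P}_U=\Phi_l$ and extracting $p\in\langle q,r\rangle$ from the non-negativity of the coefficients of $C$; the remaining steps are bookkeeping with Hilbert series and Apéry sets.
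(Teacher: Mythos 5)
Your proposal is correct and follows essentially the same route as the paper: both extract the auxiliary numerical semigroup of $q$-divisible elements (your $U$ is the paper's $S'$), show its semigroup polynomial is $\Phi_l$, invoke the length-one classification to get $l=qr$ and $U=\langle q,r\rangle$, and then reconstruct $S=qU+p\mathbb{N}$, splitting on whether $r=p$. A minor bonus of your write-up is that you derive $p\in\langle q,r\rangle$ explicitly from the non-negativity of the coefficient of $y^p$ in $(1-y^p)\mathrm{H}_U(y)$, a point the paper leaves implicit when it asserts that $q\langle q,r\rangle+_{qp}p\mathbb{N}$ is a gluing.
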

	\begin{proof}
		We can write 
		\begin{equation} \label{eq:l2:3}
		\mathrm{H}_S(x) = \Phi_l(x^q) \frac{1 - x^{pq}}{(1 - x^q) (1 - x^p)} = \frac{\Phi_l(x^q)}{1 - x^q} \sum_{i = 0}^{q-1} x^{ip}.
		\end{equation}
		Note that $\Phi_l(x) / (1 -x)\in \mathbb{Z}[\![x]\!].$ We write it
		as $\sum_{j = 0}^\infty a_j x^j.$ We consider $S' = \{j \in \mathbb{N} : a_j \ne 0\}$. %By \eqref{Mobius inversion formula2} 
		Since $\Phi_l(0) = 1$, it follows that $a_0=1$ and hence $0 \in S'$. We are going to prove that $S'$ is a numerical semigroup with $\mathrm{P}_{S'} = \Phi_l$. Equation \eqref{eq:l2:3} can be rewritten in $\mathbb{Z}[\![x]\!]$ as
		\begin{equation} \label{eq:l2:4}
		\mathrm{H}_S(x) = \left(\sum_{j = 0}^\infty a_j x^{jq}\right) \left(\sum_{i = 0}^{q-1} x^{ip}\right)= \sum_{j = 0}^\infty a_j \sum_{i = 0}^{q-1} x^{jq+ip}.
		\end{equation}
		Since $p$ and $q$ are prime numbers, if $jq+ip = j'q+i'p$ with $j, j' \in \mathbb{N}$ and $i, i' \in \{0, 1, \ldots, q-1\}$, then $i = i'$ and $j = j'$. Consequently, $a_j$ is the $qj$-th coefficient of $\mathrm{H}_S$ and hence $a_j \in \{0,1\}$. Note that $j \in S'$ if and only if $qj \in S$. Since $S$ is a numerical semigroup, we see that $S'$ is closed under addition. Furthermore, we have $(1-x)\mathrm{H}_{S'}(x) = \Phi_l(x)$. As a consequence, $S'$ is a numerical semigroup with polynomial $\Phi_l$. By Theorem~\ref{thm:l1} and the assumption $q\mid l,$ we obtain $l = q r$, where $r$ is a prime number different from $q$, and $S' = \langle q,r\rangle$. Summarizing, we have $\mathrm{P}_S(x) = \Phi_{pq}(x) \Phi_{q r}(x^q) =  \Phi_{pq}(x) \Phi_{q^2 r}(x)$, where we used \eqref{eq:cyc property}, and $\mathrm{H}_S(x) = \mathrm{H}_{S'}(x^q) \sum_{i = 0}^{q-1} x^{ip}$. From the latter equality, we obtain $S = qS' + p \mathbb{N}$. There are two possibilities:
		\begin{itemize}
			\item $r = p$. Then $\mathrm{P}_S = \Phi_{pq} \Phi_{pq^2} = \mathrm{P}_{\langle p, q^2 \rangle}$, where in the latter equality we used \eqref{e=2factorincyclotomics}. That is, $S = \langle p, q^2 \rangle$.
			\item $r \ne p$. Then $S = q\langle r,q\rangle +_{q p} p \mathbb{N}$ is a gluing and, thus, $S = \langle p, q^2, qr \rangle$. \qedhere
		\end{itemize}
	\end{proof}
	
	\begin{proof}[Proof of part 2 of Theorem~\ref{thm:main}]
		By Lemma~\ref{cs-deriv-0}, our assumption $\ell(S)=2$ implies that we can write 
		$
		\mathrm{P}_S = \Phi_n \Phi_m\text{~with~}\mu(n) = 1\text{~and~} \mu(m) = 0.
		$
		Let $q$ be the smallest prime such that $q^2\mid m$ and put $l=m/q.$ On applying 
		\eqref{eq:cyc property} we find
		\begin{equation}%\label{eq:l2:0}
		\mathrm{P}_S(x) = \Phi_n(x) \Phi_l(x^{q}).
		\end{equation} 
		Note that $n,l>1.$
		By Lemma~\ref{lem:length-2:q} it now follows that $\mathrm{P}_S(x) = \Phi_{pq}(x) \Phi_l(x^{q})$ for some prime $p\ne q.$
		The proof is completed on invoking Lemma~\ref{lem:length-2:classification} (note that $q\mid l$).
	\end{proof}

	\section{Some interconnected conjectures} \label{sec:open-problems}
	
	\subsection{Polynomial length of cyclotomic numerical semigroups} \label{sec:length}
	
	Let $S$ be a numerical semigroup. Recall that we define the \emph{polynomial length} $\ell(S)$ of $S$ as the number of irreducible factors of $\mathrm{P}_S$ (with multiplicity). If $q>p$ are two primes we have $\mathrm{P}_{\langle p, q \rangle}(x) = \Phi_{pq}(x)$ by \eqref{folklore result} and hence the polynomial length of $\langle p, q \rangle$ is $1$. This observation is generalized in Example~\ref{lengte}. This example involves some more notation that we now
	introduce. Let $d(n)=\sum_{d|n}1$ denote the number of positive divisors of $n$. We have $d(ab) \le  d(a)d(b)$ with equality if $a$ and $b$ are coprime.  
	By ir$(f)$ we denote the number of irreducible prime factors of $f,$ 
	and so $\ell(S)=\text{ir}(\mathrm{P}_S).$
	A  fundamental observation we will use is 
	that
	\begin{equation}
	\label{ir}
	\text{ir}(x^n-1)=d(n)
	\end{equation}
	(this is a consequence of \eqref{xn-1} and the irreducibility of cyclotomic polynomials). Using this and \eqref{Mobius inversion formula} we obtain the (known) identity
	$$\text{ir}(\Phi_n)=1=\sum_{\delta\mid n}d(\delta)\,\mu(n/\delta).$$
	If $S$ is complete intersection with minimal generators $n_1, \ldots, n_e$, and Betti elements $b_1, \ldots, b_{e-1}$ (with multiplicity), then from Corollary~\ref{complete intersection hilbert series} and \eqref{ir} we have
	\begin{equation} \label{eq:length-ci}
	\ell(S)= \sum_{j = 1}^{e-1} d(b_j) - \sum_{j = 1}^{e} d(n_j) + 1.
	\end{equation}
	\begin{example}
		\label{lengte}
		Let $b>a>1$ be coprime integers. The only Betti element of $\langle a, b \rangle$ is $ab$, see \eqref{embedis2}, so we have 
		$$\mathrm{ir}(\mathrm{P}_{\langle a,b\rangle}(x))= d(ab) - d(a) - d(b) + 1.$$
		From \eqref{embedis2} and the multiplicativity of the sum of divisors function $d$ we find that
		$$l(\langle a,b\rangle)=(d(a)-1)(d(b)-1).$$
	\end{example}
	If $S = a_1 S_1 +_{a_1a_2} a_2 S_2$, then from 
	Proposition~\ref{gluing iff Hilbert} and the latter example, we obtain the inequality
	\begin{equation}\label{length gluing}
	\ell(S) \geq \ell(S_1) + \ell(S_2) + (d(a_1)-1)(d(a_2)-1).
	\end{equation}

	\begin{remark}
		This lower bound is sharp. Let $S_1 = \left<2, 3 \right>$ and $S_2 = \left<5, 7 \right>$. Recall that $S_1$ and $S_2$ have length $1$ (Corollary~\ref{onefactorclassification}). We consider the following gluing of $S_1$ and $S_2$, $S = 12 \, \langle 2, 3 \rangle + 5 \, \langle 5, 7 \rangle$. Then the polynomial of $S$ is $\mathrm{P}_{\langle 12, 5 \rangle}(x) \Phi_6(x^{12}) \Phi_{35}(x^5)$. Applying \eqref{eq:cyc property} with $r=5$ and $n=7$ we see that $\Phi_{35}(x^5)=\Phi_{175}(x)$. Further, by repeated application of \eqref{eq:cyc property} we conclude that $\Phi_6(x^{12})=\Phi_6((x^4)^3)=\Phi_{18}(x^4)=\Phi_{18}((x^2)^2)=\Phi_{36}(x^2)=\Phi_{72}(x).$ It follows that
		\begin{equation*}
		\mathrm{P}_S(x)  = \mathrm{P}_{\langle 12, 5 \rangle}(x) \Phi_{72}(x) \Phi_{175}(x),
		\end{equation*}
		and $\ell(S) = (d(12)-1)(d(5) - 1) + \ell(S_1) + \ell(S_2) = 7$.
	\end{remark}
	
	\begin{proposition} \label{prop:length-inequality:ci}
		If $S$ is a complete intersection numerical semigroup, then we have $\mathrm{e}(S) \le \ell(S) + 1$.
	\end{proposition}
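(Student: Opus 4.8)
The plan is to induct on the embedding dimension $\mathrm{e}(S)$, using the recursive characterisation of complete intersections in terms of gluings (Theorem~\ref{gluing ci}). A direct attack through the explicit formula \eqref{eq:length-ci} looks unpromising, since it would amount to controlling $\sum_j d(b_j) - \sum_j d(n_j)$ from below, and the divisor counts of the minimal generators $n_j$ can be arbitrarily large; the gluing recursion instead organises the bookkeeping so that the inequality falls out cleanly. The base case is $\mathrm{e}(S) = 1$, which forces $S = \mathbb{N}$ (a single generator must be $1$ for the complement to be finite), whence $\mathrm{P}_{\mathbb{N}} = 1$, $\ell(S) = 0$, and $\mathrm{e}(S) = 1 = \ell(S) + 1$ holds with equality.

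For the inductive step I would assume $\mathrm{e}(S) \ge 2$, so by Theorem~\ref{gluing ci} we may write $S = a_1 S_1 +_{a_1 a_2} a_2 S_2$ as a gluing of two complete intersection numerical semigroups $S_1$ and $S_2$. By \eqref{emdim gluing} we have $\mathrm{e}(S) = \mathrm{e}(S_1) + \mathrm{e}(S_2)$ with each $\mathrm{e}(S_i) \ge 1$, so both $S_i$ have strictly smaller embedding dimension and the induction hypothesis applies, giving $\mathrm{e}(S_i) \le \ell(S_i) + 1$. Adding these two estimates and using \eqref{emdim gluing} yields $\mathrm{e}(S) \le \ell(S_1) + \ell(S_2) + 2$. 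On the other hand, the gluing length inequality \eqref{length gluing} gives $\ell(S) \ge \ell(S_1) + \ell(S_2) + (d(a_1)-1)(d(a_2)-1)$, so that $\ell(S) + 1 \ge \ell(S_1) + \ell(S_2) + (d(a_1)-1)(d(a_2)-1) + 1$.

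Comparing the two displays, the inequality $\mathrm{e}(S) \le \ell(S) + 1$ reduces to showing that the correction term $(d(a_1)-1)(d(a_2)-1)$ is at least $1$, so that it precisely absorbs the surplus $+2$ produced by the two applications of the induction hypothesis. I expect this to be the only delicate point, and it follows from the observation that in any gluing one necessarily has $a_1, a_2 \ge 2$: if $a_i = 1$ then the semigroup containing $a_i$ would be $\mathbb{N}$ with $1$ as its unique minimal generator, contradicting the defining requirement that $a_i$ be a non-minimal generator of its semigroup (and $a_i = 0$ is excluded by coprimality). Hence $d(a_1), d(a_2) \ge 2$, so $(d(a_1)-1)(d(a_2)-1) \ge 1$ and therefore $\ell(S) + 1 \ge \ell(S_1) + \ell(S_2) + 2 \ge \mathrm{e}(S)$, which closes the induction. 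Everything beyond this non-generator/coprimality check is just a direct combination of \eqref{emdim gluing} and \eqref{length gluing}.
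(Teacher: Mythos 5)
Your proof is correct and follows essentially the same route as the paper: induction on $\mathrm{e}(S)$ via the gluing characterisation (Theorem~\ref{gluing ci}), combining \eqref{emdim gluing} with \eqref{length gluing}. The only differences are cosmetic — you start the induction at $\mathrm{e}(S)=1$ rather than $2$, and you make explicit the observation that $a_1,a_2\ge 2$ forces $(d(a_1)-1)(d(a_2)-1)\ge 1$, which the paper leaves implicit in its final chain of inequalities.
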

	\begin{proof}
		We proceed by induction on $\mathrm{e}(S)$. If $\mathrm{e}(S) = 2$, then the result is trivial. Now let us assume that the result is true for every numerical semigroup with embedding dimension smaller than $\mathrm{e}(S) > 2$. The numerical 
		semigroup $S$ is a gluing of two complete intersection numerical semigroups $S_1$ and $S_2$ by Theorem~\ref{gluing ci}. From \eqref{emdim gluing}, our induction hypothesis, and  \eqref{length gluing} we have
		\[ \mathrm{e}(S) = \mathrm{e}(S_1) + \mathrm{e}(S_2) \leq \ell(S_1) + \ell(S_2) + 2 \leq \ell(S) + 1. \qedhere \]
	\end{proof}
	
	\begin{comment}
	
	\begin{definition}
	Let ${\mathcal L}_e$ be the set of integers $m$ with  the property that there is a cyclotomic $S$ with $\mathrm{e}(S)=e$ and $\ell_c(S)=m$.
	\end{definition}
	
	Since $\mathbb{N}$ is the only numerical semigroup with embedding dimension $1$ we have $\mathcal{L}_1 = \{0\}$. Furthermore, for every numerical semigroup $S$ with $\mathrm{e}(S) \ge 2$ the polynomial $\mathrm{P}_S(x)$ has degree greater than $0$. Hence we obtain $\mathcal{L}_e \subset \mathbb{Z}^+$. %In the following paragraphs we detail our efforts to solve this question.
	
	\end{comment}
	
	The next result shows that the inequality in Proposition~\ref{prop:length-inequality:ci} is sharp.
	\begin{proposition} \label{prop:le}
		Let $e \geq 2$ be an integer. For every $l \ge e-1$ there exists a complete intersection numerical semigroup $S$ such that $\ell(S) = l$ and $\mathrm{e}(S) =\nolinebreak e$.
		\begin{comment}
		Consequently, we have
		\[ \{i \in \mathbb{N}: i \ge e-1\} \subseteq {\mathcal L}_e. \]
		\end{comment}
	\end{proposition}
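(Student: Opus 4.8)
The plan is to construct the desired semigroup explicitly as an iterated gluing with $\mathbb{N}$, starting from a two-generated base and adjoining one prime generator at a time. Fix $e \geq 2$ and $l \geq e-1$, and set $a = l - e + 2$, so $a \geq 1$. I would choose distinct primes $p,q$ together with distinct primes $r_3,\dots,r_e$ (all different from $p,q$, to be taken as large as needed), let the base semigroup be $S_2 = \langle p^{a}, q\rangle$, and define recursively $S_k = p\,S_{k-1} +_{p\,r_k} r_k\mathbb{N}$ for $k=3,\dots,e$, using $a_1=p\in\mathbb{N}$ and $a_2=r_k\in S_{k-1}$. The final semigroup $S=S_e$ is claimed to satisfy $\mathrm{e}(S)=e$ and $\ell(S)=l$.

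I would prove by induction on $k$ the following package of statements: $S_k$ is a complete intersection, $\mathrm{e}(S_k)=k$, $\ell(S_k)=a+(k-2)$, and, crucially, every index $n$ occurring in the cyclotomic factorisation $\mathrm{P}_{S_k}=\prod_n \Phi_n^{f_n}$ is divisible by $p$. For the base case, Example~\ref{lengte} gives $\ell(S_2)=(d(p^{a})-1)(d(q)-1)=a$, while \eqref{e=2factorincyclotomics} yields $\mathrm{P}_{S_2}=\prod_{i=1}^{a}\Phi_{p^{i}q}$, so every index $p^{i}q$ is divisible by $p$; clearly $\mathrm{e}(S_2)=2$ and $S_2$ is a complete intersection. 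If $e=2$ there are no further steps and $S=S_2$ already has $\ell=a=l$.

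For the inductive step I would first verify that the gluing is legitimate: $p\geq 2$ is a non-minimal generator of $\mathbb{N}$, and $r_k$, chosen to be a prime exceeding $2(\mathrm{F}(S_{k-1})+1)$, lies in $S_{k-1}$, is coprime to $p$, and is a non-minimal generator, since it can be split as a sum of two nonzero elements of $S_{k-1}$ because of its size. Hence $S_k$ is a gluing of two complete intersections, so complete intersection by Theorem~\ref{gluing ci}, and $\mathrm{e}(S_k)=\mathrm{e}(S_{k-1})+1=k$ by \eqref{emdim gluing}. For the length, Proposition~\ref{gluing iff Hilbert} gives $\mathrm{P}_{S_k}(x)=\mathrm{P}_{\langle p,r_k\rangle}(x)\,\mathrm{P}_{S_{k-1}}(x^{p})$, using $\mathrm{P}_{\mathbb{N}}=1$. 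Since $p,r_k$ are distinct primes, $\mathrm{P}_{\langle p,r_k\rangle}=\Phi_{pr_k}$ contributes exactly one factor; by the invariant $p\mid n$ together with \eqref{eq:cyc property} one gets $\mathrm{P}_{S_{k-1}}(x^{p})=\prod_n \Phi_{np}(x)^{f_n}$, which has the same number $\ell(S_{k-1})$ of irreducible factors as $\mathrm{P}_{S_{k-1}}$. Thus $\ell(S_k)=\ell(S_{k-1})+1=a+(k-2)$, and the new indices $np$ and $pr_k$ remain divisible by $p$, so the invariant persists.

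The main obstacle is exactly this length bookkeeping: a gluing generically inflates $\ell$ through the substitution $x\mapsto x^{a_1}$, because $\Phi_n(x^{p})$ splits into two cyclotomic factors whenever $p\nmid n$ (namely $\Phi_n(x^{p})=\Phi_{np}(x)\Phi_n(x)$). The divisibility invariant $p\mid n$ is precisely what forces each substituted factor to stay irreducible, so that every gluing step raises $\ell$ by the single factor $\Phi_{pr_k}$ and by nothing more; keeping this invariant alive through all $e-2$ steps is the heart of the argument. Taking $k=e$ then gives $\ell(S)=a+(e-2)=l$ and $\mathrm{e}(S)=e$, completing the construction.
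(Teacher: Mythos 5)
Your construction is the same as the paper's: an iterated gluing $S_k = p\,S_{k-1} +_{p r_k} r_k\mathbb{N}$ starting from the base $\langle p^{a},q\rangle$ with $a=l-e+2$, which is exactly the family $S^{(e)}_k$ used in the paper's proof. The only cosmetic difference is that you track the length via the invariant ``every cyclotomic index is divisible by $p$'' while the paper writes out the explicit product of cyclotomic factors, but both rest on the same application of Proposition~\ref{gluing iff Hilbert} and \eqref{eq:cyc property}, so the argument is correct and essentially identical.
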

	\begin{proof}
		For every $k \geq 1$, we inductively construct a family of numerical semigroups $S^{(e)}_k$, such that $e(S^{(e)}_k) = e$ and $\ell(S^{(e)}_k) = e+k-2$, as follows:
		\begin{itemize}
			\item $S^{(2)}_k = \langle p_1^k,p_2 \rangle$ for some distinct primes $p_1$ and $p_2$;
			\item $S^{(e+1)}_k = p_1 S^{(e)}_k +_{p_{e+1}p_1} p_{e+1} \mathbb{N}$ for some prime $p_{e+1} \in S^{(e)}_k$ that is not a minimal generator.
		\end{itemize}
		By \eqref{emdim gluing} we conclude that
		\[ e(S^{(e)}_k) = e(S^{(e-1)}_k) + 1 = \dots = e(S^{(2)}_k) + e-2 = e. \]
		By Proposition~\ref{gluing iff Hilbert} we have $\mathrm{P}_{S^{(e)}_k}(x) =  \mathrm{P}_{S^{(e-1)}_k}(x^{p_1})\, \Phi_{p_1p_e}(x)$. Applying this formula recursively we obtain
		\[ \mathrm{P}_{S^{(e)}_k}(x) = \mathrm{P}_{S^{(2)}_k}( x^{p_1^{e-2}} )\,  \prod_{i=3}^{e} \Phi_{p_1p_i} ( x^{p_1^{e-i}} ). \]
		Now, inserting $\mathrm{P}_{S^{(2)}_k}(x) = \prod_{j=1}^k \Phi_{p_1^jp_2}(x),$ which 
		follows by \eqref{e=2factorincyclotomics}, and applying \eqref{eq:cyc property}, we infer that 
		\[ \mathrm{P}_{S^{(e)}_k}(x) = \prod_{i=3}^{e} \Phi_{p_1^{e-i+1}p_i}(x) \,\prod_{j=1}^k \Phi_{p_1^{e+j-2} p_2}(x) , \]
		and hence $ \ell(S_k^{(e)}) = e+k-2$.
	\end{proof}
	
	\begin{conjecture} \label{con:length-inequality}
		Let $S$ be a cyclotomic numerical semigroup. Then $$\mathrm{e}(S) \le \ell(S) + 1.$$
	\end{conjecture}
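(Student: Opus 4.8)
The plan is to record first that Conjecture~\ref{con:length-inequality} is an immediate consequence of the main Conjecture~\ref{conjecture}: if every cyclotomic numerical semigroup were complete intersection, then $\mathrm{e}(S)\le\ell(S)+1$ would follow at once from Proposition~\ref{prop:length-inequality:ci}. Since the main conjecture is open, the point is to establish the weaker length inequality on its own. I would attempt this by induction on $\ell(S)$, using Theorem~\ref{thm:main} to settle the base cases $\ell(S)\in\{0,1,2\}$ (where $\mathrm{e}(S)\le 3$ and the bound is immediate). For the inductive step the strategy is to peel a single irreducible factor off $\mathrm{P}_S$ and realise $S$ as a gluing, so that Corollary~\ref{gluing cyclotomic} and the estimate \eqref{length gluing} reduce the problem to smaller length.

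Concretely, Lemma~\ref{cs-deriv-0} furnishes a factor $\Phi_n\mid\mathrm{P}_S$ with $\mu(n)=1$, and the mechanism of Lemmas~\ref{lem:length-2:q} and~\ref{lem:length-2:classification} --- reducing $\mathrm{P}_S$ modulo a suitable power of $x$ to force small primes into $S$ --- should let me identify a prime $p\in S$ and a divisor $q$ with $\Phi_{pq}\mid\mathrm{P}_S$, and then split off an auxiliary Hilbert series that is again that of a cyclotomic numerical semigroup $S'$ of strictly smaller length. If $S$ can be written as a gluing $S=a_1S_1+_{a_1a_2}a_2S_2$ with $S_1,S_2$ cyclotomic of smaller length, then by induction $\mathrm{e}(S_i)\le\ell(S_i)+1$, and combining \eqref{emdim gluing} with \eqref{length gluing} yields $\mathrm{e}(S)=\mathrm{e}(S_1)+\mathrm{e}(S_2)\le\ell(S_1)+\ell(S_2)+2\le\ell(S)+1$, exactly as in Proposition~\ref{prop:length-inequality:ci} (here one uses $(d(a_1)-1)(d(a_2)-1)\ge 1$ since $a_1,a_2>1$).

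The hard part is precisely this inductive step. In the length-$2$ classification the rigidity came from having only two factors, which confined the support of $\mathrm{P}_S$ so tightly that the relevant primes were forced; for $\ell(S)\ge 3$ this control is lost, and verifying that the peeled-off factor corresponds to a genuine gluing with coprime, non-minimal attaching elements is essentially as hard as Conjecture~\ref{conjecture} itself. As an alternative that sidesteps gluings, I would reformulate the inequality algebraically: writing $\mathrm{H}_S(x)=K(x)/\prod_{i=1}^{e}(1-x^{n_i})$ with minimal generators $n_1,\dots,n_e$ (which generalises Corollary~\ref{complete intersection hilbert series}), the identity $(1-x)K(x)=\mathrm{P}_S(x)\prod_i(1-x^{n_i})$ together with $1-x^{n_i}=-\prod_{d\mid n_i}\Phi_d$ shows that for cyclotomic $S$ the numerator $K$ is, up to sign, a product of cyclotomic polynomials; taking $\text{ir}(\cdot)$ and using \eqref{ir} gives $\ell(S)=\text{ir}(K)+1-\sum_{i=1}^{e}d(n_i)$. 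Thus the conjecture is equivalent to the lower bound $\text{ir}(K)\ge \sum_{i=1}^{e}d(n_i)+\mathrm{e}(S)-2$ on the number of cyclotomic factors of the resolution numerator. Here the obstacle is that, beyond the factor $\Phi_1^{\mathrm{e}(S)-1}$ forced by the pole of $\mathrm{H}_S$ at $x=1$, controlling the cyclotomic factors of $K$ away from $x=1$ amounts to genuine lower bounds on the graded Betti numbers of the semigroup ring, in the spirit of Buchsbaum--Eisenbud--Horrocks, which are not available in the needed generality. Finally, I expect that cyclotomicity, and not merely symmetry, is indispensable to either route: the symmetric family $S_k$ with $k\ge 5$ is non-cyclotomic and hence not complete intersection, so no gluing induction can be driven by symmetry alone, and its embedding dimension $k-1$ grows without bound.
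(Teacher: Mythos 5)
This statement is Conjecture~\ref{con:length-inequality} of the paper; it is posed as an open problem and the paper contains no proof of it, so there is nothing to compare your argument against line by line. Your proposal is appropriately honest on this point: you do not claim to close the argument, and the two partial reductions you do make are correct. In particular, the implication from Conjecture~\ref{conjecture} via Proposition~\ref{prop:length-inequality:ci} is exactly the paper's own remark, your base cases $\ell(S)\le 2$ are genuinely settled by Theorem~\ref{thm:main}, and your diagnosis that the gluing induction stalls because one cannot certify that a peeled-off factor comes from an actual gluing is an accurate description of the obstruction. Your algebraic reformulation $\ell(S)=\mathrm{ir}(K)+1-\sum_i d(n_i)$ is also correct, given \eqref{ir} and the fact that $(1-x)K=\mathrm{P}_S\prod_i(1-x^{n_i})$ forces $K$ to be a product of cyclotomic polynomials up to sign.

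What you do not consider is the route the paper actually takes toward this conjecture, which goes through the cyclotomic exponent sequence rather than through gluings or Betti numbers. Writing $\mathrm{P}_S=\prod_j(1-x^j)^{e_j}$ as in \eqref{eq:exponent-sequence}, Theorem~\ref{thm:ces:generators} shows that $\{d:e_d<0\}$ generates $S$, so $\mathrm{e}(S)\le\sum_{e_d<0}(-e_d)$, and Proposition~\ref{prop:sum-e} converts this to $\sum_{e_d>0}e_d$. The whole conjecture then reduces to the assertion that the positive exponents $e_d$ ($d\ge 2$) are dominated by the multiplicities $f_d$ in the cyclotomic factorization of $\mathrm{P}_S$ --- this is Conjecture~\ref{con:maximals-D}, which is in turn implied by Conjecture~\ref{con:msg} on minimal generators. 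This chain (Conjecture~\ref{conjecture} $\Rightarrow$ Conjecture~\ref{con:msg} $\Rightarrow$ Conjecture~\ref{con:maximals-D} $\Rightarrow$ Conjecture~\ref{con:length-inequality}) isolates a cleaner combinatorial statement about the $(1-x^j)$-expansion than either of your two routes, and is where the paper locates the remaining difficulty; you may find it a more tractable target than lower bounds on graded Betti numbers.
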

	
	Using Proposition~\ref{prop:length-inequality:ci} we see that
	\[\text{Conjecture } \ref{conjecture}\Longrightarrow \text{Conjecture }  \ref{con:length-inequality}.\]
	Assuming Conjecture~\ref{con:length-inequality} holds true, the proof of Theorem~\ref{thm:main} can be greatly simplified. Namely, by assumption $\ell(S) \le 2$, so we would have $\mathrm{e}(S) \le \ell(S) + 1 \le 3$ and hence $S$ would be  complete intersection by Corollary~\ref{lessthanfour}.
	
	To conclude this section, we have computed all the cyclotomic numerical semigroups with Frobenius number at most $70$, and classified them in terms of their polynomial length. These cyclotomic numerical semigroups are complete intersections, as it was computationally checked in \cite{ciolan2016cyclotomic}, and there are $835$ of them. The results are displayed in Table~\ref{tab:lenght}. The largest polynomial length found among these semigroups is $8$. Recall that Sections~\ref{sec:length:1} and \ref{sec:length:2} of the present paper study cyclotomic numerical semigroups of polynomial length at most $2$, which add up to $138$ semigroups out of the $835$ computed.
	
	\begin{table}[h]
		\captionsetup{width=0.8\textwidth}
		\centering
		\caption{Number of cyclotomic numerical semigroups with Frobenius number at most $70$, grouped by their polynomial length.}
		\begin{tabular}{l|llllllll}
			Length & 1 & 2 & 3 & 4 & 5 & 6 & 7 & 8 \\
			\hline
			Number of semigroups & 33 & 105 & 224 & 196 & 165 & 74 & 34 & 4
		\end{tabular}
		\label{tab:lenght}
	\end{table}

	\subsection{Cyclotomic exponent sequence}
	Given $f(x)\in {\mathbb Z}[x]$ with $f(0)=1$, there
	exist unique integers $f_j$ such that the formal identity
	$
	%\begin{equation} \label{eq:exponent-sequence}
	f(x)= \prod_{j = 1}^\infty \left(1 - x^j\right)^{f_j}
	%\end{equation}
	$ holds, see  \cite[Lemma 1]{automata} or \cite[Lemma 3.1]{cyclotomic-exponent-sequences}.
	In particular, since $\mathrm{P}_S(0) = 1$, there exist unique integers $e_j$ such that the formal identity
	\begin{equation} \label{eq:exponent-sequence}
	\mathrm{P}_S(x)= \prod_{j = 1}^\infty \left(1 - x^j\right)^{e_j}
	\end{equation}
	holds. The sequence $\mathbf{e}=\{e_j\}_{j \in \mathbb{N}}$ is known as the \emph{cyclotomic exponent sequence} of $S$. This sequence was introduced in \cite[Section 6]{ciolan2016cyclotomic} and later studied in \cite{cyclotomic-exponent-sequences}. From \eqref{eq:exponent-sequence} and the uniqueness of the exponents $\mathbf{e}$, one  can show that $S$ is a cyclotomic numerical semigroup if and only if $\mathbf{e}$ has only a finite number of non-zero elements, see \cite[Proposition 2.4]{cyclotomic-exponent-sequences} for details. If this is the case, then, by \eqref{ir}, we obtain
	\begin{equation*}
	\ell(S)= \sum_{j = 1}^{\infty} e_j \,d(j).
	\end{equation*}
	This equality generalizes equation \eqref{eq:length-ci}, which gives the length of a complete intersection numerical semigroup. 
	
	One of the main results of \cite{cyclotomic-exponent-sequences} is the following.
	
	\begin{theorem}[{\cite[Theorem 1.1]{cyclotomic-exponent-sequences}}] \label{thm:ces:generators}
		Let $S\ne\mathbb N$ be a numerical semigroup and let $\mathbf{e}$ be its cyclotomic exponent sequence. Then
		\begin{enumerate}
			\item $e_1 = 1;$
			\item $e_j = 0$ for every $j \ge 2$ not in $S;$
			\item $e_j = -1$ for every minimal generator $j$ of $S$;
			\item $e_j = 0$ for every $j$ in $S$ that has only one factorization and is not a minimal generator.
		\end{enumerate}
	\end{theorem}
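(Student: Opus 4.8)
The plan is to pass to logarithms and isolate the minimal generators by a Möbius inversion, thereby reducing all four parts to a single statement about the numerator of the Hilbert series of $S$ in its lowest terms. Writing $\mathrm{P}_S=\prod_{j\ge 1}(1-x^j)^{e_j}$ and taking $-\log$, the expansion $-\log(1-x^j)=\sum_{k\ge1}x^{jk}/k$ gives $-\log \mathrm{P}_S(x)=\sum_{n\ge1}\tfrac{c_n}{n}x^n$ with $c_n=\sum_{d\mid n}d\,e_d$; Möbius inversion yields $n\,e_n=\sum_{d\mid n}\mu(n/d)c_d$. On the other hand $\mathrm{P}_S=(1-x)\mathrm{H}_S$ by \eqref{flauw}, so $c_n=1-n\,[x^n]\log\mathrm{H}_S$. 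Since $S\ne\mathbb N$ forces $\mathrm{H}_S\equiv 1\pmod{x^2}$, part (1) is immediate; and for $n\ge 2$ the constant term drops out (as $\sum_{d\mid n}\mu(n/d)=0$), leaving
\[ n\,e_n=-\sum_{d\mid n}\mu(n/d)\,d\,[x^d]\log\mathrm{H}_S. \]

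First I would dispose of the minimal generators. Let $n_1,\dots,n_e$ be the minimal generators, and set $F(x)=\prod_i(1-x^{n_i})^{-1}=\sum_s \mathsf{f}(s)x^s$, where $\mathsf{f}(s)$ is the number of factorizations of $s$ as a sum of generators. Then $\log F$ is explicit, with $d\,[x^d]\log F=\sum_{i:\,n_i\mid d}n_i$. Writing $\log\mathrm{H}_S=\log F+\log Q$, where $Q=\mathrm{H}_S/F$, the contribution of $\log F$ to the displayed sum collapses by a short divisor manipulation, $\sum_{d\mid n}\mu(n/d)\sum_{i:\,n_i\mid d}n_i=\sum_i n_i\,\mathbb{1}[n=n_i]=n\,\mathbb{1}[n\text{ is a minimal generator}]$. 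Hence, for $n\ge 2$,
\[ e_n=-\,\mathbb{1}[n\text{ is a minimal generator}]+\epsilon_n, \]
where $\epsilon_n$ is the $n$-th exponent in $Q=\prod_{j\ge1}(1-x^j)^{\epsilon_j}$. All four parts now reduce to the single assertion that $\epsilon_n=0$ whenever $\mathsf{f}(n)\le 1$: non-elements have $\mathsf{f}(n)=0$, while minimal generators and unique-factorization non-generators have $\mathsf{f}(n)=1$, so this yields (2) and (4) and — since it kills $\epsilon_n$ at a generator — also (3).

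Next I would make $Q$ concrete. As $S+n_1\subseteq S$, identity \eqref{eq:apery-hilbert} gives $\mathrm{H}_S(x)(1-x^{n_1})=\sum_{w\in\Ap(S,n_1)}x^w$, so $Q(x)=\bigl(\sum_{w\in\Ap(S,n_1)}x^w\bigr)\prod_{i\ge2}(1-x^{n_i})$ is a polynomial. Conceptually $Q$ is the numerator of $\mathrm{H}_S$ over $\prod_i(1-x^{n_i})$, i.e. the $K$-polynomial of the semigroup ring $k[S]$, whose coefficients are the alternating sum of its graded Betti numbers; these are supported in multiple-factorization degrees, since an element carrying a nonzero higher syzygy must admit at least two factorizations. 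Establishing the coefficient version $q_n:=[x^n]Q=0$ for $\mathsf{f}(n)\le 1$ is a first, cleaner step, governed combinatorially by the convolution recursion $q_n=\mathbb{1}[n\in S]-\sum_{0\le k<n}q_k\,\mathsf{f}(n-k)$.

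The hard part — and the step I expect to be the genuine obstacle — is upgrading this from the coefficients $q_n$ to the multiplicative exponents $\epsilon_n$: the exponent sequence is defined through an infinite product, so controlling the coefficients of $Q$ does not formally control its exponents. The examples $S=\langle 2,5\rangle$, where $Q=1-x^{10}$, and $S=\langle 4,5,11\rangle$, where $Q=1-x^{15}-x^{16}-x^{22}+x^{26}+x^{27}$, show that unique-factorization elements do occur above the first multiple-factorization element, yet the exponents still vanish there. I would handle this by strong induction on $n$, combining the Witt-type recursion expressing $n\epsilon_n$ through $q_1,\dots,q_n$ and $\epsilon_1,\dots,\epsilon_{n-1}$ with the structural fact that a single factorization of $n$ cannot be assembled from strictly smaller elements across a nontrivial relation; the crux is to show that every interference term produced by the lower exponents is again concentrated on multiple-factorization degrees, so that nothing can land on an index $n$ with $\mathsf{f}(n)\le 1$.
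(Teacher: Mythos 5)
The paper does not prove this theorem: it is imported verbatim from \cite[Theorem 1.1]{cyclotomic-exponent-sequences}, so there is no internal proof to compare against. Judged on its own, your argument is correct in all the steps you actually carry out, and the one step you single out as the ``genuine obstacle'' does close, by exactly the structural fact you name. Your reductions check: the M\"obius inversion $n e_n=\sum_{d\mid n}\mu(n/d)c_d$ with $c_d=1-d\,[x^d]\log\mathrm{H}_S$; the evaluation $\sum_{d\mid n}\mu(n/d)\sum_{i:\,n_i\mid d}n_i=n$ precisely when $n$ is a minimal generator (and $0$ otherwise); hence $e_n=\epsilon_n-[n\text{ is a minimal generator}]$ for $n\ge 2$, where $Q=\mathrm{H}_S\prod_i(1-x^{n_i})=\prod_j(1-x^j)^{\epsilon_j}$. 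Since every $n$ covered by parts (2)--(4) has $\mathsf{f}(n)\le 1$ (for a minimal generator the unique factorization is the trivial one), the theorem is indeed equivalent to the single claim that $\epsilon_n=0$ whenever $\mathsf{f}(n)\le 1$.

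To finish, no graded Betti numbers are needed. Put $T=\{m\ge 1:\mathsf{f}(m)\ge 2\}$; adding one fixed factorization of $s\in S$ to two distinct factorizations of $m\in T$ produces two distinct factorizations of $m+s$, so $T+S\subseteq T$, and in particular $T+T\subseteq T$. First, your convolution recursion $q_m=[m\in S]-\sum_{0\le k<m}q_k\,\mathsf{f}(m-k)$ already yields the coefficient statement by induction: for $m\notin T$ the $k=0$ term equals $\mathsf{f}(m)=[m\in S]$, while any nonzero term with $k\ge 1$ would need $q_k\ne 0$ (so $k\in T$ by induction) and $m-k\in S$, forcing $m\in k+S\subseteq T$; hence $q_m=0$. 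Second, induct on $n$ with $\mathsf{f}(n)\le 1$: assuming $\epsilon_j=0$ for all $j<n$ with $j\notin T$, one gets $Q\equiv\bigl(\prod_{j\in T,\ j<n}(1-x^j)^{\epsilon_j}\bigr)\,(1-\epsilon_n x^n)\pmod{x^{n+1}}$, and the first factor is $1$ plus a series supported on sums of elements of $T$, hence on $T$. As $n\notin T$, comparing coefficients of $x^n$ gives $0=q_n=-\epsilon_n$. In other words, the ``interference terms'' you worry about are concentrated on $T$ for the elementary reason that $T$ is closed under adding elements of $S$; with that observation written out, your proof is complete.
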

	As a consequence of this theorem, the set $\{n \in \mathbb{N}: e_n < 0\}$ is a system of generators of $S$. In the case of cyclotomic numerical semigroups, the authors of \cite{cyclotomic-exponent-sequences} made the following conjecture.
	
	\begin{conjecture}[{\cite[Conjecture 7.1]{cyclotomic-exponent-sequences}}] \label{con:msg}
		Let $S$ be a cyclotomic numerical semigroup and let $\mathbf{e}$ be its cyclotomic exponent sequence. Then $n \in \mathbb{N}$ is a minimal generator of $S$ if and only if $e_n < 0$.
	\end{conjecture}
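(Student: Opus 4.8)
The plan is to treat the two implications separately, taking Theorem~\ref{thm:ces:generators} as the starting point. The forward implication is immediate: if $n$ is a minimal generator of $S$, then part~(3) gives $e_n = -1 < 0$. For the converse, suppose $e_n < 0$. Part~(1) gives $e_1 = 1$, so necessarily $n \ge 2$; part~(2) shows that $e_n \neq 0$ forces $n \in S$; and part~(4) shows that a non-minimal-generator of $S$ with a single factorization has $e_n = 0$. Consequently, the only way for a non-generator to satisfy $e_n < 0$ is to admit at least two factorizations, and the whole statement reduces to the claim \emph{$e_n \ge 0$ for every $n \ge 2$ that is not a minimal generator of $S$}; by parts~(2) and~(4) this is already automatic unless $n \in S$ has two or more factorizations, so that case carries the real content.

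To establish this claim I would first dispose of the complete intersection case, where it falls out cleanly. If $S$ is complete intersection with minimal generators $n_1, \dots, n_e$ and Betti elements $b_1, \dots, b_{e-1}$ (listed with multiplicity), then Corollary~\ref{complete intersection hilbert series} gives
\[ \mathrm{P}_S(x) = (1-x)\,\mathrm{H}_S(x) = (1-x)\,\frac{\prod_{k=1}^{e-1}(1-x^{b_k})}{\prod_{i=1}^{e}(1-x^{n_i})}. \]
Comparing this with the defining product \eqref{eq:exponent-sequence} and invoking the uniqueness of the cyclotomic exponent sequence, one reads off
\[ e_n = [n=1] + \#\{\,k : b_k = n\,\} - \#\{\,i : n_i = n\,\}, \]
where $[n=1]$ equals $1$ if $n=1$ and $0$ otherwise. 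Since the $b_k$ lie in $S \setminus \{n_1, \dots, n_e\}$ by Corollary~\ref{complete intersection hilbert series}, while the $n_i$ are distinct and exceed $1$ (as $S \neq \mathbb{N}$), for every $n \ge 2$ that is not a minimal generator the last two counts vanish on the negative side, leaving $e_n = \#\{k : b_k = n\} \ge 0$. This settles the claim, hence Conjecture~\ref{con:msg}, for all complete intersection numerical semigroups; in particular the conjecture follows from Conjecture~\ref{conjecture}, and it holds unconditionally whenever $\ell(S) \le 2$ by Theorem~\ref{l leq 2}.

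For an unconditional proof valid for an arbitrary cyclotomic $S$ one must drop the complete intersection hypothesis and argue directly from the factorization $\mathrm{P}_S = \prod_{d \in \mathcal{D}} \Phi_d^{f_d}$. Substituting the M\"obius expansion \eqref{Mobius inversion formula2} and collecting the exponent of $(1-x^n)$ yields the closed form
\[ e_n = \sum_{\substack{d \in \mathcal{D} \\ n \mid d}} f_d\, \mu(d/n), \]
so the claim becomes a sign statement: for every non-generator $n$, the contributions with $d/n$ a product of an even number of distinct primes must outweigh those with an odd number. The main obstacle is controlling this alternating sum uniformly over all $n$ without the Betti-element bookkeeping that the complete intersection structure supplies; I expect that producing a bona fide combinatorial interpretation of $e_n$ as a nonnegative count at every non-generator is the crux, and that doing so is likely of comparable depth to Conjecture~\ref{conjecture} itself.
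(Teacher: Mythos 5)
The statement you were asked to prove is stated in the paper as a \emph{conjecture} (Conjecture~\ref{con:msg}, quoted from the earlier paper on cyclotomic exponent sequences); the paper contains no proof of it and none is known. Your proposal correctly recognizes this. The forward implication and the reduction of the converse to the claim that $e_n \ge 0$ for every $n \ge 2$ that is not a minimal generator are both valid consequences of Theorem~\ref{thm:ces:generators}, and your verification of that claim in the complete intersection case via Corollary~\ref{complete intersection hilbert series} is sound: it reproduces exactly the observation the paper itself records when it notes that Conjecture~\ref{conjecture} implies Conjecture~\ref{con:msg}, and (via Theorem~\ref{l leq 2}) it does give the statement unconditionally whenever $\ell(S) \le 2$. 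The closed form $e_n = \sum_{d \in \mathcal{D},\, n \mid d} f_d\, \mu(d/n)$ that you extract from \eqref{Mobius inversion formula2} is also correct.

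The genuine gap is the one you name yourself: for a cyclotomic $S$ not known to be a complete intersection, nothing in your argument controls the sign of that alternating sum at a non-generator $n$ admitting two or more factorizations, and this is precisely the open content of the conjecture. So your submission is an honest partial result plus a correct diagnosis of where the difficulty lies, not a proof; since the target is an open conjecture, that is the best that can be said, but it should be presented as such rather than as a proof. One small caution about your closing sentence: proving Conjecture~\ref{con:msg} directly need not be ``of comparable depth'' to Conjecture~\ref{conjecture} in a precise sense --- the paper only establishes the one-way chain Conjecture~\ref{conjecture} $\Rightarrow$ Conjecture~\ref{con:msg} $\Rightarrow$ Conjecture~\ref{con:maximals-D} $\Rightarrow$ Conjecture~\ref{con:length-inequality}, so Conjecture~\ref{con:msg} is formally weaker and could in principle admit an independent proof.
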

	
	The cyclotomic exponent sequence of a complete intersection numerical semigroup can be easily obtained from Corollary~\ref{complete intersection hilbert series}. Note that for these semigroups the only integers with $e_n < 0$ are the minimal generators. Hence, as already noted in \cite[Proposition 7.3]{cyclotomic-exponent-sequences}, we have 
	\[\text{Conjecture } \ref{conjecture}\Longrightarrow \text{Conjecture }  \ref{con:msg}.\]
	In the rest of this section we relate Conjectures~\ref{con:length-inequality} and \ref{con:msg}. In order to do so, we formulate a further conjecture.
	
	\begin{conjecture} \label{con:maximals-D}
		Let $S$ be a cyclotomic numerical semigroup. Hence $$\mathrm{P}_S(x) = \prod_{d \in \mathcal{D}} \Phi_d(x)^{f_d}$$ for some finite set of positive integers $\mathcal{D}$ and positive integers $f_d$. Let $\mathbf{e}$ the cyclotomic exponent sequence of $S$. 
		Then 
		$$\{d \ge 2 : e_d > 0\} \subseteq \mathcal{D},$$ and $e_d \le f_d$ for every $d \ge 2$ with $e_d > 0$.
	\end{conjecture}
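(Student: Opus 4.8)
The plan is to extract from the two factorizations of $\mathrm{P}_S$ --- the cyclotomic one $\prod_{d\in\mathcal D}\Phi_d^{f_d}$ and the exponent one $\prod_{j\ge1}(1-x^j)^{e_j}$ of \eqref{eq:exponent-sequence} --- a single identity linking the $f_d$ and the $e_j$. The quickest route is to read off the multiplicity of each $\Phi_d$ as an order of vanishing: since $\Phi_d$ is the minimal polynomial of a primitive $d$th root of unity $\zeta_d$, the integer $f_d$ equals the order of vanishing of $\mathrm{P}_S$ at $\zeta_d$. Computing this order from \eqref{eq:exponent-sequence}, and using that $1-x^j$ has a simple zero at $\zeta_d$ exactly when $d\mid j$ and no zero otherwise, I obtain for every $d\ge2$
\begin{equation*}
f_d=\sum_{\substack{j\ge1\\ d\mid j}}e_j=\sum_{m\ge1}e_{dm},
\end{equation*}
with the convention $f_d=0$ for $d\notin\mathcal D$. (Equivalently, one substitutes $x^j-1=\prod_{\delta\mid j}\Phi_\delta$ into \eqref{eq:exponent-sequence} and compares exponents, which is a M\"obius inversion of the defining relation of $\mathbf e$.)

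Granting this identity, both assertions of the conjecture reduce to a single sign statement. For $d\ge2$ we have $f_d-e_d=\sum_{m\ge2}e_{dm}$, so the inequality $e_d\le f_d$ is equivalent to $\sum_{m\ge2}e_{dm}\ge0$. Moreover, once $e_d\le f_d$ is known together with $e_d>0$, it follows that $f_d\ge e_d>0$, hence $f_d\ge1$ and $d\in\mathcal D$; this is exactly the inclusion $\{d\ge2:e_d>0\}\subseteq\mathcal D$. Thus the second assertion implies the first, and it suffices to prove that $\sum_{m\ge2}e_{dm}\ge0$ whenever $d\ge2$ and $e_d>0$.

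To handle this sign statement I would invoke Theorem~\ref{thm:ces:generators}. First, $e_d>0$ with $d\ge2$ forces $d\in S$ by part (2). Then every multiple $dm$ with $m\ge2$ lies in $S$ and factors as $dm=d+d(m-1)$ with both summands nonzero elements of $S$, so $dm$ is not a minimal generator. Hence each term $e_{dm}$ appearing in $\sum_{m\ge2}e_{dm}$ is evaluated at a non--minimal generator, and the whole sum is $\ge0$ as soon as one knows that $e_k\ge0$ for every $k\in S$ that is not a minimal generator --- equivalently, that $e_k<0$ forces $k$ to be a minimal generator. This is precisely one implication of Conjecture~\ref{con:msg}, so Conjecture~\ref{con:msg} implies the present one; in particular the statement holds for all complete intersection numerical semigroups, for which the only negative exponents occur at minimal generators.

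The hard part, and the reason this remains a conjecture, is exactly this last positivity: controlling the signs of $e_k$ at elements $k$ with more than one factorization, which is the open content of Conjecture~\ref{con:msg}. An unconditional proof would need a lower bound on the partial sums $\sum_{m\ge2}e_{dm}$ that bypasses the full minimal-generator conjecture --- for example a combinatorial reading of $e_k$ in terms of factorizations or syzygies of $S$ that makes the cancellation in $\sum_{m\ge2}e_{dm}$ manifestly nonnegative. I expect this to be the main obstacle, since proving $e_k\ge0$ for even a single non-generator $k$ in full generality is currently out of reach.
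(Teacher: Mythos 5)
The statement you were asked to prove is an open conjecture: the paper offers no proof of it, only the chain of implications Conjecture~\ref{conjecture} $\Rightarrow$ Conjecture~\ref{con:msg} $\Rightarrow$ Conjecture~\ref{con:maximals-D} $\Rightarrow$ Conjecture~\ref{con:length-inequality}. You correctly recognized this, and what you actually establish --- that Conjecture~\ref{con:msg} implies Conjecture~\ref{con:maximals-D} --- is precisely the middle implication proved in the paper, so your proposal is as complete as the current state of the art allows. Your route is mildly different and arguably cleaner: you compare the multiplicity of $\Phi_d$ in the two factorizations of $\mathrm{P}_S$ directly, obtaining the identity $f_d=\sum_{m\ge1}e_{dm}$ for $d\ge2$ (valid since $\mathbf{e}$ has finite support for cyclotomic $S$, with $\sum_j e_j=0$ disposing of the sign and of $\Phi_1$), and then reduce both assertions to the single positivity statement $\sum_{m\ge2}e_{dm}\ge0$. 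The paper instead multiplies $\mathrm{P}_S$ by $\prod_{i}(1-x^{n_i})$ to clear the negative exponents granted by Conjecture~\ref{con:msg}, and argues that $\Phi_d$ does not divide $\prod_i(1-x^{n_i})$ because $d\in S$ cannot properly divide a minimal generator $n_i$; this is the same multiplicity count in disguise. Your formulation has the merit of isolating exactly what is missing for an unconditional proof --- the inequality $\sum_{m\ge2}e_{dm}\ge0$ whenever $d\ge2$ and $e_d>0$ --- which is a priori weaker than the full Conjecture~\ref{con:msg}, whereas the paper invokes that conjecture wholesale. Your supporting steps (that $e_d>0$ with $d\ge2$ forces $d\in S$ by Theorem~\ref{thm:ces:generators}(2), and that $dm=d+d(m-1)$ is then never a minimal generator for $m\ge2$, so under Conjecture~\ref{con:msg} each $e_{dm}\ge0$) are correct and parallel the paper's own use of Theorem~\ref{thm:ces:generators}.
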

	
	In addition, we will need the following proposition, the proof of which we include for completeness.
	
	\begin{proposition}[{\cite[Proposition 2.3]{cyclotomic-exponent-sequences}}] \label{prop:sum-e}
		Let $S$ be a numerical semigroup and let $\mathbf{e}$ be its cyclotomic exponent sequence. If $S$ is cyclotomic, then 
		$\sum_{j \ge 1} e_j  = 0$.
	\end{proposition}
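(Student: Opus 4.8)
Let $S$ be a numerical semigroup with cyclotomic exponent sequence $\mathbf{e}$. If $S$ is cyclotomic, then $\sum_{j \ge 1} e_j = 0$.

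Let me sketch how I'd prove this.

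The plan is to exploit the defining identity \eqref{eq:exponent-sequence} together with the fact that cyclotomicity forces the sequence $\mathbf{e}$ to be finitely supported, and then to compare the order of vanishing of both sides at the single point $x=1$. Since $S$ is cyclotomic, only finitely many $e_j$ are nonzero, so $\prod_{j\ge 1}(1-x^j)^{e_j}$ is a finite product and hence defines a rational function which, by \eqref{eq:exponent-sequence}, coincides with the polynomial $\mathrm{P}_S(x)$. This finiteness is the only place where the hypothesis is used, and it is exactly what makes ``order of vanishing at $x=1$'' a meaningful and well-defined quantity.

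First I would record the local behaviour of each factor at $x=1$. Writing $x=1-u$ and expanding, one has $1-x^j = ju + O(u^2)$, so each factor $1-x^j$ has a simple zero at $x=1$ with leading term $j(1-x)$. Multiplying these contributions together yields
\begin{equation*}
\prod_{j\ge 1}(1-x^j)^{e_j} = \Big(\prod_{j\ge 1} j^{e_j}\Big)\,(1-x)^{\sum_{j\ge 1} e_j}\,\bigl(1+O(1-x)\bigr),
\end{equation*}
where $\prod_{j\ge 1} j^{e_j}$ is a finite nonzero rational number (the factor $j=1$ contributes $1$, and only finitely many further factors occur). In other words, the order of vanishing of the left-hand side at $x=1$ equals $\sum_{j\ge 1} e_j$, a pole being counted as negative order.

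On the other hand, $\mathrm{P}_S$ is a polynomial with $\mathrm{P}_S(1)=1\ne 0$, so its order of vanishing at $x=1$ is exactly $0$. Matching the two orders forces $\sum_{j\ge 1} e_j = 0$. Concretely, if this sum were positive the displayed right-hand side would tend to $0$ as $x\to 1$, and if it were negative it would blow up; in either case one contradicts $\mathrm{P}_S(1)=1$, so the exponent must vanish. I expect no serious obstacle in carrying this out: the computation of the leading term of each $1-x^j$ is routine, and the only point deserving a word of care is the appeal to the finite support of $\mathbf{e}$, which guarantees that \eqref{eq:exponent-sequence} is an honest identity of rational functions so that the limit comparison is legitimate. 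For a general numerical semigroup the product is infinite and this reasoning would not apply directly.
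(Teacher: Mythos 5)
Your proof is correct and follows essentially the same route as the paper: both factor out $(1-x)^{\sum_j e_j}$, observe that the remaining factor is a rational function taking the nonzero finite value $\prod_j j^{e_j}$ at $x=1$, and conclude from $\mathrm{P}_S(1)=1$ that the exponent must vanish. Your explicit remark that finite support (i.e.\ cyclotomicity) is what makes the order-of-vanishing comparison legitimate is exactly the point the paper relies on implicitly via the largest nonzero index $N$.
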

	\begin{proof}
		Let $N$ be the largest index $j$ such that $e_j\ne 0.$ Then we have
		\[ \operatorname{P}_S(x)= (1-x)^{\sum_{j\le N}e_j}G_S(x),\] 
		for some rational function $G_S(x)$ satisfying $G_S(1)\not \in \{ 0, \infty \}$
		(in fact $G_S(1)=\prod_{j\le N}j^{e_j}$). Since $\operatorname{P}_S(1)=1$, it follows that $\sum_{j\ge 1}e_j=0$. 
	\end{proof}
	
	\begin{proposition}
		The following implications hold:
		\[
		\text{Conjecture \ref{conjecture}} \Longrightarrow \text{Conjecture \ref{con:msg}} \Longrightarrow \text{Conjecture \ref{con:maximals-D}} \Longrightarrow \text{Conjecture \ref{con:length-inequality}.}
		\]
	\end{proposition}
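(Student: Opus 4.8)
The plan is to prove the three implications separately, in each case passing between the two multiplicative expansions of the semigroup polynomial: the cyclotomic factorisation $\mathrm{P}_S=\prod_{d\in\mathcal D}\Phi_d^{f_d}$ and the exponent expansion $\mathrm{P}_S=\prod_{j\ge 1}(1-x^j)^{e_j}$ from \eqref{eq:exponent-sequence}. The routine bookkeeping sits in the first and last implications; the middle one carries the real content.

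For the implication \textbf{Conjecture~\ref{conjecture} $\Rightarrow$ Conjecture~\ref{con:msg}}, I would assume $S$ is cyclotomic and apply Conjecture~\ref{conjecture} to conclude $S$ is complete intersection. Then Corollary~\ref{complete intersection hilbert series} lets me write $\mathrm{P}_S(x)=(1-x)\prod_{j=1}^{e-1}(1-x^{b_j})\prod_{i=1}^{e}(1-x^{n_i})^{-1}$, where the $n_i$ are the minimal generators and the $b_j$ the Betti elements. Because the $b_j$ lie in $S\setminus\{n_1,\dots,n_e\}$ (so none equals $1$ or a minimal generator), collecting the exponent of each $(1-x^m)$ — which by uniqueness of the expansion is exactly $e_m$ — gives $e_1=1$, $e_m=-1$ precisely when $m$ is a minimal generator, and $e_m\ge 0$ otherwise. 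This is Conjecture~\ref{con:msg} (the case $S=\mathbb N$ being trivial).

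For the implication \textbf{Conjecture~\ref{con:msg} $\Rightarrow$ Conjecture~\ref{con:maximals-D}}, I would first record the change of basis between the two expansions. Substituting \eqref{Mobius inversion formula2} into $\mathrm{P}_S=\prod_{d\in\mathcal D}\Phi_d^{f_d}$ and comparing exponents of $(1-x^c)$ yields $e_c=\sum_{c\mid n}\mu(n/c)\tilde f_n$, where $\tilde f_n=f_n$ if $n\in\mathcal D$ and $\tilde f_n=0$ otherwise. Möbius inversion over the divisibility poset then gives the clean identity $\tilde f_d=\sum_{d\mid m}e_m$, so that $d\in\mathcal D$ exactly when this sum is positive, in which case $f_d=\sum_{d\mid m}e_m$. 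Now fix $d\ge 2$ with $e_d>0$. Part~2 of Theorem~\ref{thm:ces:generators} forces $d\in S$, hence every proper multiple $m=dt$ with $t\ge 2$ lies in $S$ and is a sum of at least two copies of $d$, so $m$ is not a minimal generator. Assuming Conjecture~\ref{con:msg}, this means $e_m\ge 0$ for all such $m$, whence $\sum_{d\mid m,\,m>d}e_m\ge 0$ and therefore
\[
f_d=\sum_{d\mid m}e_m=e_d+\sum_{d\mid m,\,m>d}e_m\ge e_d\ge 1 .
\]
The inequality $f_d\ge 1$ gives $d\in\mathcal D$, and $f_d\ge e_d$ gives $e_d\le f_d$; together these are Conjecture~\ref{con:maximals-D}.

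For the implication \textbf{Conjecture~\ref{con:maximals-D} $\Rightarrow$ Conjecture~\ref{con:length-inequality}}, I would combine Proposition~\ref{prop:sum-e} with $e_1=1$ (part~1 of Theorem~\ref{thm:ces:generators}) to get $\sum_{j\ge 2,\,e_j<0}(-e_j)=1+\sum_{j\ge 2,\,e_j>0}e_j$. Since every minimal generator contributes $e_j=-1$ by part~3 of Theorem~\ref{thm:ces:generators}, the left-hand side is at least $\mathrm{e}(S)$. On the right, Conjecture~\ref{con:maximals-D} gives $\{j\ge 2:e_j>0\}\subseteq\mathcal D$ and $e_j\le f_j$, so $\sum_{j\ge 2,\,e_j>0}e_j\le\sum_{d\in\mathcal D}f_d=\ell(S)$. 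Chaining the two bounds yields $\mathrm{e}(S)\le\ell(S)+1$, which is Conjecture~\ref{con:length-inequality}. The main obstacle is the middle implication, and within it the two ingredients that must be combined correctly: the Möbius-inversion identity $f_d=\sum_{d\mid m}e_m$, which turns the hypothesis about $\mathcal D$ into a statement purely about the sequence $\mathbf e$, and the elementary but crucial observation that a proper multiple of a semigroup element can never be a minimal generator, which is precisely what lets Conjecture~\ref{con:msg} control the sign of $e_m$ along the multiples of $d$.
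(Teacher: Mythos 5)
Your proof is correct, and its first and third implications coincide with the paper's: the first reads the exponent sequence off the factorisation in Corollary~\ref{complete intersection hilbert series}, and the third chains $\mathrm{e}(S)\le\sum_{d:\,e_d<0}(-e_d)=1+\sum_{d\ge 2,\,e_d>0}e_d\le 1+\sum_{d\in\mathcal{D}}f_d=1+\ell(S)$ exactly as in the paper, via Proposition~\ref{prop:sum-e} and Theorem~\ref{thm:ces:generators}. The middle implication is where you take a genuinely different route. The paper assumes Conjecture~\ref{con:msg}, clears denominators to get $\prod_{i=1}^{e}(1-x^{n_i})\,\mathrm{P}_S(x)=\prod_{d:\,e_d>0}(1-x^d)^{e_d}$, and argues by divisibility: $\Phi_d^{e_d}$ divides the right-hand side, while $\Phi_d$ is coprime to $\prod_{i}(1-x^{n_i})$ because $d\in S$, $d\ne n_i$, and a minimal generator cannot be a proper multiple of a semigroup element; irreducibility of $\Phi_d$ then forces $\Phi_d^{e_d}\mid\mathrm{P}_S$. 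You instead substitute \eqref{Mobius inversion formula2} into the cyclotomic factorisation and invert over the divisor lattice to obtain the exact identity $f_d=\sum_{d\mid m}e_m$ (with $f_d:=0$ off $\mathcal{D}$), then bound the tail $\sum_{m>d,\,d\mid m}e_m\ge 0$ using Conjecture~\ref{con:msg} together with the same observation about proper multiples. The two arguments use identical ingredients, but yours makes explicit a clean change-of-basis formula between the exponent sequence and the multiplicities $f_d$ that the paper leaves implicit, and yields the sharper conclusion $f_d=e_d+\sum_{t\ge 2}e_{dt}$ rather than just $f_d\ge e_d$; the paper's version avoids the inversion computation at the cost of a slightly less transparent multiplicity comparison. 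All finiteness issues in your inversion are covered by the cyclotomicity of $S$, so the argument is complete.
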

	\begin{proof} 
		The first implication follows from Corollary~\ref{complete intersection hilbert series}.  Let $S$ be a cyclotomic numerical semigroup and let $n_1, \ldots, n_e$ be its minimal generators. Let us assume that $S$ satisfies Conjecture~\ref{con:msg}. Then we have
		\begin{equation*}
		\prod_{i = 1}^e (1-x^{n_i}) \,\mathrm{P}_S(x) = \prod_{d \in \mathbb{N}; \, e_d > 0} (1 - x^d)^{e_d}
		\end{equation*}
		Let $d \in \mathbb{N}$ with $e_d > 0$ and $d \ge 2$. We are going to prove that $\Phi_d^{e_d}$ divides $\mathrm{P}_S$. Recall that $\Phi_d^{e_d}$ divides $(1-x^d)^{e_d}$ exactly. Now we argue that $\Phi_d$ does not divide $\prod_{i = 1}^e (1-x^{n_i})$. Note that $\Phi_d$ divides $\prod_{i = 1}^e (1-x^{n_i})$ if and only if there exists $i \in \{1, 2, \ldots, e\}$ such that $d$ divides $n_i$.    
		By Theorem~\ref{thm:ces:generators} it follows that $d \ne n_i$ and  $d \in S.$ Since by assumption $n_i$ is a minimal generator of $S$, we conclude that $d$ does not divide $n_i$. This along with the fact that $\Phi_d$ is irreducible allows us to conclude that $\Phi_d^{e_d}$ exactly divides $\mathrm{P}_S$ and, hence, $e_d \le f_d$.
		
		Now let us assume that $S$ satisfies Conjecture~\ref{con:maximals-D}. Since $\{d \in \mathbb{N} : e_d < 0\}$ is a (finite) system of generators of $S$ by Theorem~\ref{thm:ces:generators}, we find that
		\[\mathrm{e}(S) \le \sum_{d \in \mathbb{N}; \, e_d < 0} (-e_d).\]
		From Proposition~\ref{prop:sum-e}, we obtain
		\[ \sum_{d \in \mathbb{N}; \, e_d < 0} (-e_d) = \sum_{d \in \mathbb{N}; \, e_d > 0} e_d. \]
		Finally, because we are assuming that $S$ satisfies Conjecture~\ref{con:maximals-D}, we have
		\[\mathrm{e}(S)\le \sum_{d \in \mathbb{N}; \, e_d > 0} e_d \le e_1 + \sum_{d \in \mathcal{D}} f_d = 1+\ell(S). \]
		We conclude that $\mathrm{e}(S) \le \ell(S) + 1$.
	\end{proof}

	\noindent \textbf{Acknowledgement.} The authors thank Pedro A. Garc\'ia-S\'anchez for putting the authors in contact with each other and for helpful conversations on this work. In addition they thank the referee for helpful feedback.
	
	\bibliographystyle{abbrv}
	\bibliography{Reference.bib}

\begin{thebibliography}{10}

\bibitem{assi2015frobenius}
A.~Assi, P.~A. Garc\'{\i}a-S\'{a}nchez, and I.~Ojeda.
\newblock Frobenius vectors, {H}ilbert series and gluings of affine semigroups.
\newblock {\em J. Commut. Algebra}, 7(3):317--335, 2015.

\bibitem{borzi2020graded}
A.~Borz\`\i and A.~D'Al\`\i.
\newblock Graded algebras with cyclotomic {H}ilbert series.
\newblock {\em J. Pure Appl. Algebra}, 225(12):106764, 2021.

\bibitem{cyclotomic-exponent-sequences}
A.~Ciolan, P.~Garc{\'\i}a-S{\'a}nchez, A.~Herrera-Poyatos, and P.~Moree.
\newblock Cyclotomic exponent sequences of numerical semigroups.
\newblock {\em arXiv:2101.08826}, 2021.

\bibitem{ciolan2016cyclotomic}
E.-A. Ciolan, P.~A. Garc\'{\i}a-S\'{a}nchez, and P.~Moree.
\newblock Cyclotomic numerical semigroups.
\newblock {\em SIAM J. Discrete Math.}, 30(2):650--668, 2016.

\bibitem{delgadonumericalsgps}
M.~Delgado, P.~A. Garc{\'\i}a-S{\'a}nchez, and J.~Morais.
\newblock {NumericalSgps. A package for numerical semigroups, Version 1.0.1}.
\newblock Available at
  \url{http://www.gap-system.org/Packages/numericalsgps.html}, 2015.

\bibitem{M2}
D.~R. Grayson and M.~E. Stillman.
\newblock Macaulay2, a software system for research in algebraic geometry.
\newblock Available at \url{http://www.math.uiuc.edu/Macaulay2/}.

\bibitem{herrera2018coefficients}
A.~Herrera-Poyatos and P.~Moree.
\newblock Coefficients and higher order derivatives of cyclotomic polynomials:
  old and new.
\newblock {\em Expos. Math.}, 2020.
\newblock \url{https://doi.org/10.1016/j.exmath.2019.07.003}.

\bibitem{herzog1970generators}
J.~Herzog.
\newblock Generators and relations of abelian semigroups and semigroup rings.
\newblock {\em Manuscripta Math.}, 3:175--193, 1970.

\bibitem{migotti1883theorie}
A.~Migotti.
\newblock Zur {T}heorie der {K}reisteilungsgleichung.
\newblock {\em S.-B. der Math.-Naturwiss. Class der Kaiser. Akad. der Wiss.,
  Wien}, 87:7--14, 1883.

\bibitem{automata}
P.~Moree.
\newblock Approximation of singular series and automata.
\newblock {\em Manuscripta Math.}, 101(3):385--399, 2000.
\newblock With an appendix by Gerhard Niklasch.

\bibitem{moree2014numerical}
P.~Moree.
\newblock Numerical semigroups, cyclotomic polynomials, and {B}ernoulli
  numbers.
\newblock {\em Amer. Math. Monthly}, 121(10):890--902, 2014.

\bibitem{gap2015gap}
\relax The GAP~Group.
\newblock Gap--groups, algorithms, and programming, version 4.7.9.
\newblock Available at \url{http://www.gap-system.org}, 2015.

\bibitem{rosales2009numerical}
J.~C. Rosales and P.~A. Garc\'{\i}a-S\'{a}nchez.
\newblock {\em Numerical semigroups}, volume~20 of {\em Developments in
  Mathematics}.
\newblock Springer, New York, 2009.

\bibitem{sawhney2018symmetric}
M.~Sawhney and D.~Stoner.
\newblock On symmetric but not cyclotomic numerical semigroups.
\newblock {\em SIAM J. Discrete Math.}, 32(2):1296--1304, 2018.

\bibitem{stanley2011enumerative}
R.~P. Stanley.
\newblock {\em Enumerative combinatorics. {V}olume 1}, volume~49 of {\em
  Cambridge Studies in Advanced Mathematics}.
\newblock Cambridge University Press, Cambridge, second edition, 2012.

\bibitem{thangadurai2000coefficients}
R.~Thangadurai.
\newblock On the coefficients of cyclotomic polynomials.
\newblock In {\em Cyclotomic fields and related topics ({P}une, 1999)}, pages
  311--322. Bhaskaracharya Pratishthana, Pune, 2000.

\bibitem{Wein}
S.~H. Weintraub.
\newblock Several proofs of the irreducibility of the cyclotomic polynomials.
\newblock {\em Amer. Math. Monthly}, 120(6):537--545, 2013.

\end{thebibliography}
	
	\noindent
	{\scshape Alessio Borz\`{i}} \quad \texttt{Alessio.Borzi@warwick.ac.uk}\\
	{\scshape  Mathematics Institute, University of Warwick, Coventry CV4 7AL, United Kingdom.}
	
	\medskip
	\noindent
	{\scshape Andr\'es Herrera-Poyatos} \quad \texttt{andres.herrerapoyatos@cs.ox.ac.uk}\\
	{\scshape Department of Computer Science, University of Oxford, Wolfson Building, Parks Road, Oxford, OX1 3QD, United Kingdom.}
	
	\medskip
	\noindent
	{\scshape Pieter Moree} \quad \texttt{moree@mpim-bonn.mpg.de}\\
	{\scshape Max-Planck-Institut f\"ur Mathematik, Vivatsgasse 7, D-53111 Bonn, Germany.}
	
\end{document}